\documentclass[a4paper,10pt]{article}

\usepackage{amsmath, amssymb, amsthm}
\usepackage{graphics}

\usepackage{psfrag, booktabs}
\usepackage{color, epsfig, float}
\usepackage[active]{srcltx}
\usepackage[bookmarksopen, colorlinks, linkcolor = blue, urlcolor = red,
            citecolor = red, menucolor = blue]{hyperref}




\newtheorem{theorem}{Theorem}

\newcommand\om{\omega}

\newcommand\F{\mathrm{F}}

\newcommand\R{\mathbb{R}}

\newcommand\tm{\subseteq}

\newcommand\norm[1]{\|#1\|}
\newcommand\set[1]{\{#1\}}
\newcommand\abs[1]{|#1|}
\newcommand\f {\mathbf}


\newcommand\dd{\mathrm{d}}

\newcommand\Mo{\mathrm{M}}
\newcommand\Ro{\mathrm{R}}

\newcommand\di{\partial}

\newcommand\req[1]{(\ref{eq:#1})}

\newcommand\bracket[1]{\langle#1\rangle}

\newcommand\fs {\boldsymbol}

\newcommand{\x}{x}
\newcommand{\y}{y}

\title
{Regularization of systems of nonlinear ill-posed equations:
II. Applications}

\author
{}


\begin{document}





\maketitle

\centerline{\scshape Markus Haltmeier}
{\footnotesize
\centerline{Department of Computer Science, University of Innsbruck}
\centerline{Technikerstr. 21a, A-6020 Innsbruck, Austria}
}
\medskip
\centerline{\scshape Richard Kowar}
{\footnotesize
\centerline{Department of Computer Science, University of Innsbruck}
\centerline{Technikerstr. 21a, A-6020 Innsbruck, Austria}
}
\medskip
\centerline{\scshape Antonio Leit\~ao}
{\footnotesize
\centerline{Department of Mathematics, Federal University of St. Catarina}
\centerline{P.O. Box 476, 88040-900 Florian\'opolis, Brazil}
}
\medskip
\centerline{\scshape Otmar Scherzer} {\footnotesize
\centerline{Department of Computer Science, University of
Innsbruck} \centerline{Technikerstr. 21a, A-6020 Innsbruck,
Austria,} \centerline{ and } \centerline{RICAM, Austrian Academy
of Sciences} \centerline{ Altenberger Stra\ss{}e 69, A-4040 Linz,
Austria.} }
\medskip

\noindent
\textbf{2000 Mathematics Subject Classification.}
Primary: 65J20, 65J15; Secondary: 47J06. \\
\textbf{Keywords.}
Ill-posed systems; Landweber--Kaczmarz methods; Inverse doping;
Schlieren tomography; Thermoacoustic tomography.

\begin{abstract}
In part I we introduced modified Landweber--Kaczmarz methods and
have established a convergence analysis. In the present work we
investigate three applications: an inverse problem related to thermoacoustic
tomography, a nonlinear inverse problem for semiconductor equations, and a
nonlinear problem in Schlieren tomography. Each application is
considered in the framework established in the previous part. The
novel algorithms show robustness, stability, computational
efficiency and high accuracy.
\end{abstract}

\section{Introduction}
\label{sec:intro}

In \cite{HLS06} we analyzed novel iterative
\emph{Landweber--Kaczmarz methods} for solving \textit{systems of
ill-posed equations}
\begin{equation}\label{eq:u-ix}
    \F_i( \x ) = \y^{\delta,i}\,,  \quad \;  i = 0, \dots,  N-1  \;.
\end{equation}
Here $\F_i: D_i\tm X \to Y$ are operators between Hilbert spaces $X$
and $Y$ and $\y^{\delta,i}$ are approximations of noise free data
$\y^i$ with $\norm{\y^{\delta,i} - \y^i} \leq \delta^i $ for $i \in
\set{ 0, \dots,  N-1}$.

The first method  analyzed in \cite{HLS06} is the \textit{loping}
Landweber--Kaczmarz (\textsc{lLK}) method
\begin{equation} \label{eq:lwk-lop}
    \x_{n+1} = \x_{n} - \om_n {\F_{[n]}'(\x_{n})}^\ast
                                ( \F_{[n]}(\x_{n}) - \y^{\delta,[n]}) \,,
\end{equation}
with a \emph{bang--bang relaxation parameter}
\begin{equation} \label{eq:skip}
    \om_n := \om_n( \delta^i,  \y^{\delta,i})   =
    \begin{cases}
        1  & \norm{ \F_{[n]}(\x_{n}) - \y^{\delta,i}}
        > \tau \delta^i \\
        0  & \mbox{ otherwise}
    \end{cases}\,.
\end{equation}
Here $\tau > 2$ is an appropriately chosen constant and $[n] := n
\mod N$. We refer to $N$ subsequent iterations as one iteration
\textit{cycle}. The \textsc{lLK} method skips inner iterations, if
the data are sufficiently well approximated. The whole iteration
is terminated if $\norm{ \F_i(\x_{n_\ast^\delta+i}) -
\y^{\delta,i}} < \tau \delta^i$ for \textit{all}  $i \in \set{ 0,
\dots, N-1}$.

The second method  analyzed in \cite{HLS06}, is the
\textit{embedded} Landweber--Kaczmarz (\textsc{eLK}) method,
\begin{eqnarray} \label{eq:lw-emb}
  \f \x_{n+1/2} &=
  \f \x_{n} - \omega_{n} \f F'(\f \x_{n})^\ast ( \f F(\f \x_{n})   - \f \y^\delta )\,,
  \\ \label{eq:lw-emb2}
  \f \x_{n+1} &=
  \f \x_{n+1/2} - \omega_{n+1/2} \f G ( \f \x_{n+1/2} ) \,.
\end{eqnarray}
Here $\f \x := (\x^i)_i \in X^N$, $\f \y^\delta := (\y^{\delta,i})_i \in Y^N$,
$\f F (\f \x)  :=  (\F_i(\x^i))_i \in Y^N$,
$\delta := \max\set{\delta^i: i = 0,\dots, N-1 }$ and $\f G$ is a scalar multiple
of the steepest descent direction of the functional
$ \mathcal{G} (\f \x) := \sum_{i=0}^{N-1} \norm{\x^{i+1} - \x^{i}}^2$
on $X^N$. Moreover,
\begin{eqnarray*}
\begin{aligned}
    \omega_n   &=
    \begin{cases}
        1  & \norm{ \f F(\f \x_n) - \f \y^{\delta}}
         > \tau \delta \\
        0  & \mbox{ otherwise }
    \end{cases}\,,
        \\
    \omega_{n+1/2}
    & =  \begin{cases}
      1  & \norm{ \f G ( \f \x_{n+1/2} ) }
      > \tau  \epsilon(\delta)\\
      0  & \mbox{ otherwise }
\end{cases}\,,
\end{aligned}
\end{eqnarray*}
with  $\epsilon(\delta) \to 0$, as $\delta \to 0$.
The \textsc{eLK} iteration is terminated after $n_\star^\delta$ iterations when
$\norm{ \f F ( \f \x_{n_\star^\delta} )  - \f \y^{\delta}} \leq \tau \delta$
and $\norm{ \f G( \f \x_{n_\star^\delta} ) }\leq \tau \epsilon(\delta)$.
One  \textit{cycle} of the \textsc{eLK} method is defined by performing both iterations
in~\req{lw-emb} and~\req{lw-emb2}. Motivations for these iteration methods can
be found in~\cite{HLS06}.

In~\cite{HLS06} we have proven stability and convergence for both the \textsc{eLK} and \textsc{lLK} method.
In this article we apply the methods to three different problems: a linear inverse problem related to
thermoacoustic tomography, an inverse problem for semiconductors and Schlieren tomography.
For validation, the results are compared to the classical \emph{Landweber--Kaczmarz} (LK) method
\cite{Nat97,KowSch02}
\begin{equation}\label{eq:lwk-class}
    \x_{n+1} =
    \x_{n} -
     \F_{[n]}^\prime(\x_{n})^\ast ( \F_{[n]}  (\x_{n}) - \y^{\delta,[n]} ) \,.
\end{equation}
The outline of this article is as follows. In Section \ref{sec:tct} we apply both methods,
the \textsc{lLK} method and the \textsc{eLK} method, to an inverse problem related to
\emph{thermoacoustic computed tomography}, which mathematically can be reduced to the inversion
of the circular mean transform.
In Section \ref{sec:semicond} we consider the exponentially ill-posed
nonlinear \emph{inverse problem of estimating the doping profile in a
semiconductor} from voltage--current measurements.
In Section \ref{sec:sch} we consider \emph{Schlieren tomography},
an imaging method for reconstructing three dimensional pressure
fields.

\section{Thermoacoustic computed tomography}
\label{sec:tct}

\textit{Thermoacoustic} computed tomography is a  promising new
imaging modality that has the potential to become a mayor
non--invasive imaging method.
In thermoacoustic imaging an object of interest is illuminated by short electromagnetic pulses,  such as optical illumination or radio waves,
which results in an excitation of acoustic waves (pressure variations).
The spatially varying initial  pressure distribution inside the object caries valuable structural
and functional information  and  is reconstructed from acoustical data which is recorded with detectors
outside of the object.
Among the several publications on thermoacoustic imaging we mention
\cite{AndEtMul00, KruKisReiKruMil03, WanPanKuXieStoWan03, XuMWan03,XuMWan06}.


\begin{figure}
    \begin{center}
        \includegraphics[height = 0.22\textwidth]{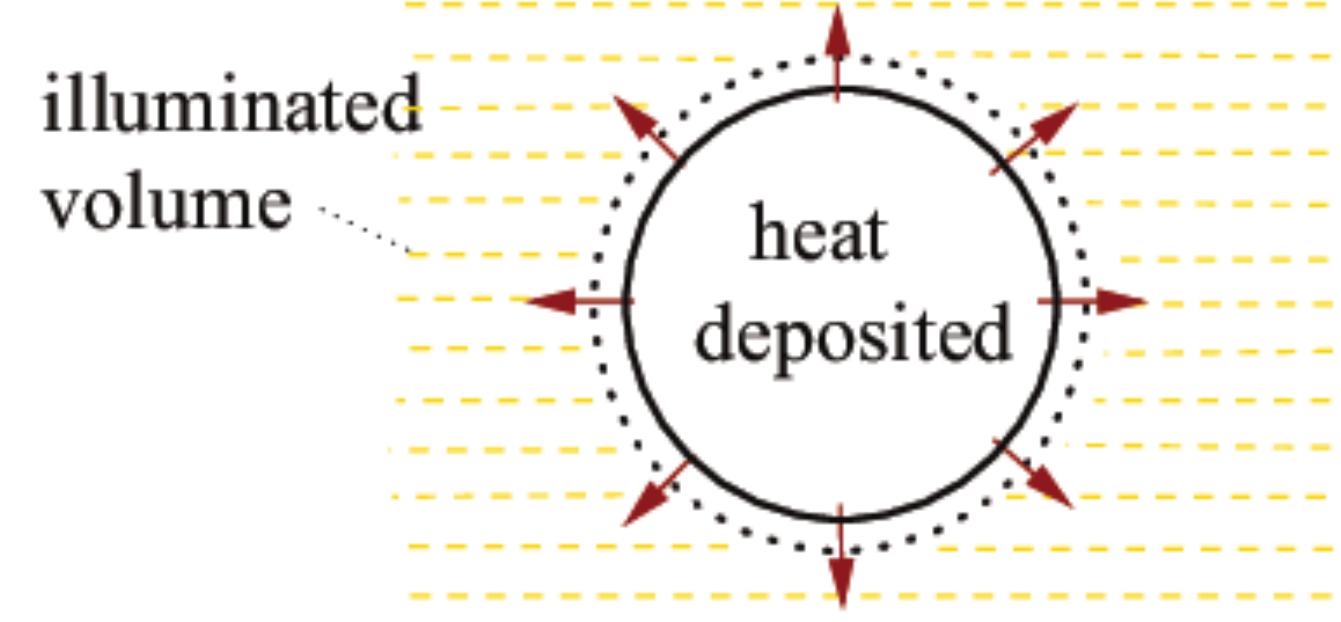}
        \hfill
        \includegraphics[height = 0.22\textwidth]{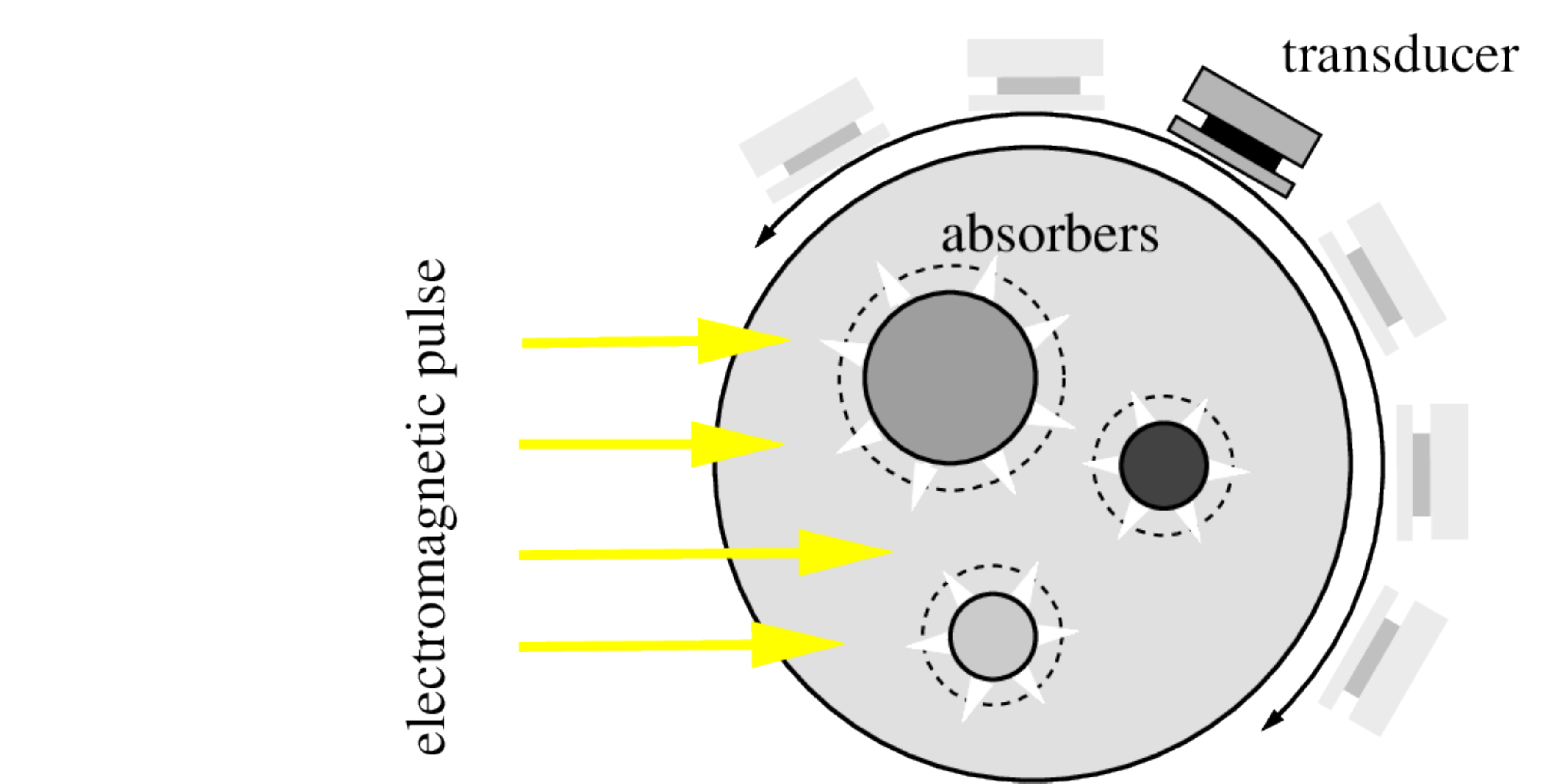}
    \end{center}
      \caption[Illustration of Thermoacoustic computed tomography]{{\bf Thermoacoustic computed tomography.}
        Left.  A sample is illuminated with pulsed electromagnetic energy.
        Right. The induced acoustic waves are recorded with  acoustic detectors
        located outside of the object.}
    \label{fg:tctscan}
\end{figure}

\subsection{Mathematical model}

Assume that the initial pressure distribution   $x ( \xi_1, \xi_2) $
is independent of the third spatial coordinate, let $\fs \xi
:= (\xi_1, \xi_2)$ and let $B_R$ denote the open disc (in $\R^2$)
with radius $R$.

Mathematically,  in such a situation, thermoacoustic tomography reduces to the
problem of recovering $x$ from \cite{HalSchBurNusPal06}
\begin{equation}\label{eq:eqMo}
    \Mo_i (\x  ) = \y^{\delta, i} \,, \quad i=0, \dots, N-1.
\end{equation}
with noisy data $\y^{\delta, i}$, where
\begin{equation} \label{eq:smean}
    \Mo_i (\x)(t)
   : =
    \frac{1}{\sqrt{\pi}}
        \int_{S^1}
        \x(\fs \xi_{i} + t \fs\sigma)
        \, \dd \Omega(\fs\sigma)
        \,,  \quad i = 0, \dots, N-1
\end{equation}
is the \textit{scaled mean value} of $\x  \in C_0^\infty(B_R)$ over the circle with center
$\fs \xi_i \in \di B_R$  and radius $t \in [0, 2R] $.
Here $S^1$ denotes the surface of the unit disc in $\R^2$ and $\dd \Omega$ the line measure
on $S^1$ and $(\fs \xi, 0)$ corresponds to the locations of an acoustic receiver, see Figure \ref{fg:tctscan}.
This section is devoted to the stable solution of \req{eqMo}.

Recently, we proposed a detection technique \cite{BurHofPalHalSch05} 
where an array of parallel \textit{line detectors} is rotated around the around the object 
and pressure signals along the line detectors are recorded. Then the solution of \req{eqMo} 
allows for reconstructing of a fully three dimensional initial pressure distribution.
In experiments line detectors are realized by thin laser beams that are either part of a 
Fabry--Perot or a Mach--Zehnder interferometer \cite{BurHofPalHalSch05, PalNusHalBur06}.

\subsection{Abstract formulation in Hilbert--spaces}

In the following we show that the problem of recovering of a function $x$
from its circular means can be put in the abstract framework of \cite{HLS06}.

In the remainder of this section, let $L^2(B_R)$ be the Hilbert space of square integrable functions on
$B_R$ with $\norm{x}^2 :=  \int_{B_R} x(\fs \xi)^2 d \fs \xi$. Moreover, we denote by $L^2([0, 2R], t \dd t )$ the
Hilbert space of all functions $\y: [0,\infty) \to \R$ (observable quantities) with support in $[ 0, 2R ]$ with
\begin{equation*}
    \norm{\y}_{Y}^2
    :=
        \int_0^\infty
            \y(t)^2 \,
        t\dd t
    < \infty \,.
\end{equation*}
Finally we denote by
\begin{equation*}
    \bracket{\y_1, \y_2}_{Y} =
    \int_0^\infty \y_1(t) \y_2(t)
    \, t \dd t
\end{equation*}
the associated inner product on $L^2([0,2R],t\dd t)$.

The scaled \textit{circular mean operators}
are defined by
\begin{equation*}
    \Mo_i: C_0^\infty(B_R)\to L^2([0,2R],t\dd t): \x \mapsto \Mo_i( \x )\,,
\end{equation*}
where $\Mo_i(\x)$ is defined
by \req{smean}.

\begin{theorem}
The operators $\Mo_i$ can be continuously extended to
$\Mo_i: L^2(B_R) \to L^2([0,2R],t\dd t)$ with $\norm{\Mo_i} \leq 1$.
The adjoint $\Mo_i^\ast: L^2([0,2R],t\dd t) \to L^2(B_R)$ is
given by
\begin{equation}\label{eq:sphM-ad}
    \Mo_i^\ast (\y) ( \fs \xi )  =
    \y(\abs{ \fs \xi_i - \fs \xi})/ \sqrt \pi
    \,,
    \quad \;  i = 0, \dots, N-1\;.
\end{equation}
\end{theorem}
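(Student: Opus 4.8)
The plan is to realise the claimed adjoint as a concrete operator first and then recover $\Mo_i$ from it. Define $\Mo_i^\ast\colon L^2([0,2R],t\,\dd t)\to L^2(B_R)$ by the right-hand side of~\req{sphM-ad}, that is, $(\Mo_i^\ast\y)(\fs\xi):=\y(\abs{\fs\xi_i-\fs\xi})/\sqrt\pi$. I would (i) check that this is a well-defined bounded operator with $\norm{\Mo_i^\ast}\le 1$; (ii) pass to its Hilbert-space adjoint $(\Mo_i^\ast)^\ast\colon L^2(B_R)\to L^2([0,2R],t\,\dd t)$, which is automatically bounded with the same norm; and (iii) show that $(\Mo_i^\ast)^\ast$ coincides with $\Mo_i$ on $C_0^\infty(B_R)$. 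Since $C_0^\infty(B_R)$ is dense in $L^2(B_R)$, step (iii) identifies $(\Mo_i^\ast)^\ast$ as the (unique) continuous extension of $\Mo_i$; its norm is $\le 1$ by (i)--(ii), and its adjoint is $\bigl((\Mo_i^\ast)^\ast\bigr)^\ast=\Mo_i^\ast$, which is exactly formula~\req{sphM-ad}.

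For (i): the function $\fs\xi\mapsto\y(\abs{\fs\xi_i-\fs\xi})$ is measurable because $\fs\xi\mapsto\abs{\fs\xi_i-\fs\xi}$ is continuous, and since $\abs{\fs\xi_i-\fs\xi}<2R$ for $\fs\xi\in B_R$ and $\fs\xi_i\in\di B_R$ only the values of $\y$ on $[0,2R]$ enter (so the map respects $t\,\dd t$-a.e.\ equivalence). Introducing polar coordinates $\fs\xi=\fs\xi_i+t\fs\sigma$ centred at $\fs\xi_i$,
\begin{equation*}
    \norm{\Mo_i^\ast\y}^2
    =\frac{1}{\pi}\int_{B_R}\y(\abs{\fs\xi_i-\fs\xi})^2\,\dd\fs\xi
    =\frac{1}{\pi}\int_0^{2R}\y(t)^2\,\ell_i(t)\,t\,\dd t\,,
\end{equation*}
where $\ell_i(t)$ denotes the angular measure of $\set{\fs\sigma\in S^1:\fs\xi_i+t\fs\sigma\in B_R}$. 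The one genuinely non-routine point is the elementary geometric bound $\ell_i(t)\le\pi$ for every $t\in[0,2R]$: by rotational symmetry one may take $\fs\xi_i=(R,0)$ with $B_R$ centred at the origin, whereupon $\fs\xi_i+t(\cos\theta,\sin\theta)\in B_R$ amounts to $R^2+2Rt\cos\theta+t^2<R^2$, i.e.\ $\cos\theta<-t/(2R)$; since $0\le t\le 2R$ the admissible angles form a set of measure $2\pi-2\arccos(-t/(2R))\le 2\pi-2\cdot(\pi/2)=\pi$. Hence $\norm{\Mo_i^\ast\y}^2\le\frac{1}{\pi}\cdot\pi\cdot\int_0^{2R}\y(t)^2\,t\,\dd t=\norm{\y}_Y^2$.

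For (iii): if $\x\in C_0^\infty(B_R)$ then $\Mo_i(\x)$ is a bounded continuous function supported in $[0,2R]$, hence already lies in $L^2([0,2R],t\,\dd t)$. For $\y$ continuous with compact support in $(0,2R)$ — a dense subset of $Y$ — Fubini's theorem applies (as $\x$ is bounded with compact support and $\y\in L^1([0,2R],t\,\dd t)$), and the same polar-coordinate substitution gives
\begin{align*}
    \bracket{\Mo_i\x,\y}_Y
    &=\frac{1}{\sqrt\pi}\int_0^\infty\int_{S^1}\x(\fs\xi_i+t\fs\sigma)\,\y(t)\,\dd\Omega(\fs\sigma)\,t\,\dd t\\
    &=\frac{1}{\sqrt\pi}\int_{B_R}\x(\fs\eta)\,\y(\abs{\fs\xi_i-\fs\eta})\,\dd\fs\eta
    =\bracket{\x,\Mo_i^\ast\y}_{L^2(B_R)}\,.
\end{align*}
Since this holds for all $\y$ in a dense subset of $Y$, we get $\Mo_i\x=(\Mo_i^\ast)^\ast\x$ for every $\x\in C_0^\infty(B_R)$, which closes the argument outlined above. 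The main obstacle is genuinely the arc-length estimate $\ell_i(t)\le\pi$; everything else is bookkeeping with Fubini and polar coordinates.
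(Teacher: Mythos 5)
Your argument is correct, but it runs in the opposite direction to the paper's. The paper first bounds the forward operator: for $\x\in C_0^\infty(B_R)$ it estimates $\norm{\Mo_i(\x)}_Y\le\norm{\x}$ by Cauchy--Schwarz on the $S^1$-integral, absorbing the factor $1/\pi$ precisely because $\int_{S^1}\chi_{B_R}(\fs\xi_i+t\fs\sigma)\,\dd\Omega(\fs\sigma)\le\pi$ (the same arc-length fact you prove explicitly, left implicit there), extends $\Mo_i$ by density, and only then computes the adjoint via the Fubini/polar-substitution identity $\bracket{\Mo_i\x,\y}_Y=\bracket{\x,\Mo_i^\ast\y}$. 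You instead take the backprojection formula as the primary object, bound it directly --- where the estimate needs no Cauchy--Schwarz, since $\norm{\Mo_i^\ast\y}^2=\tfrac1\pi\int_0^{2R}\y(t)^2\ell_i(t)\,t\,\dd t$ is exact up to the arc factor $\ell_i(t)\le\pi$ --- and recover the extension of $\Mo_i$ as the double adjoint $(\Mo_i^\ast)^\ast$, identified on $C_0^\infty(B_R)$ by the very same duality computation. The two proofs thus rest on identical ingredients (the half-circle geometry at a boundary point and the polar-coordinate Fubini identity); yours makes the geometric content explicit and gets the norm bound for free on the adjoint side at the cost of the extra double-adjoint/density bookkeeping, while the paper's is slightly shorter because it never needs to argue that $(\Mo_i^\ast)^\ast$ is the continuous extension. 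One small point to keep tidy in your step (iii): to pass from the dense class of $\y$'s to all of $Y$ you should note that both sides of the pairing identity are continuous in $\y$ (which your boundedness of $\Mo_i^\ast$ and $\Mo_i\x\in Y$ indeed give), but this is routine and does not affect correctness.
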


\begin{proof}
Assume that $\x\in C_0^\infty( B_R )$. From the definition of $\Mo_i(\x)$,
the Cauchy Schwarz inequality and Fubini's theorem it follows that
\begin{equation*}
\begin{aligned}
    \norm{
            \Mo_i ( \x )
         }_{Y}^2
    &=
    \frac{1}{\pi} \int_{0}^\infty
    \left(
        \int_{S^1}
            \x( \fs \xi_i + t \fs\sigma ) \cdot \chi_{B_R}(\fs \xi_i + t \fs\sigma)
        \, \dd \Omega(\fs\sigma)
    \right)^2
    t\dd t
    \\
    &\leq
    \frac{1}{\pi}
    \int_{0}^\infty
                        \left( \int_{S^1}
            \chi_{B_R}(\fs \xi_i + t \fs\sigma)
        \, \dd \Omega(\fs\sigma)
        \int_{S^1}
                \x( \fs \xi_i + t \fs\sigma )^2
                \, \dd \Omega(\fs\sigma)
        \right)
        t\dd t
    \\
    &\leq
            \int_{0}^\infty
                \int_{S^1}
                    \x( \fs \xi_i + t \fs\sigma )^2
            \ \dd \Omega(\fs\sigma) \ t\dd t
    =
    \norm{ \x }^2\,.
\end{aligned}
\end{equation*}
Hence $\Mo_i$ is bounded from $C_0^\infty(\Omega)$ into $L^2([0,2R],tdt)$ and therefore can be extended to a bounded linear operator mapping on
$L^2(B_R)$ with $\norm{\Mo_i} \leq 1$.
In particular $\Mo_i$ has a bounded adjoint.

Let $\x \in C_0^\infty( B_R )$ and $\y \in C_0^\infty([0,2R])$. From Fubini's theorem
and the substitution $\fs \xi = \fs \xi_i + t \fs\sigma$ it follows that
\begin{equation*}
\begin{aligned}
    \bracket{
        \Mo_i ( \x ), \y
    }_{Y}
    & =
    \frac{1}{\sqrt \pi}
    \int_{0}^\infty
            \y(t)
            \left(
                \int_{S^1}
                \x( \fs \xi_i + t \fs\sigma )
                \, \dd \Omega(\fs\sigma)
            \right)
    t \dd t
    \\
    &=
    \frac{1}{\sqrt \pi} \int_{S^1}
        \int_{0}^\infty
            \x( \fs \xi_i + t \fs\sigma )
              \y(t)
         \, t\dd t \,
    \dd \Omega(\fs\sigma)
    \\
    &=
    \int_{B_R}
            \x( \fs \xi )
            \frac{\y(\abs{\fs \xi - \fs \xi_i})}{\sqrt \pi}
    \, \dd \fs \xi
    =
    \bracket{ \x , \Mo_i^\ast ( \y ) } \,.
\end{aligned}
\end{equation*}
This shows \req{sphM-ad}.
\end{proof}

We note that, for linear bounded operators, the
\emph{tangential cone condition}~\cite[Eq. (15)]{HLS06}  is  satisfied with $\eta =0$.
Since $\Mo_i$ is linear and $\norm{\Mo_i} \leq 1$, the \textsc{lLK} method and
the \textsc{eLK} method provide  convergent regularization methods for
solving \req{eqMo}.

\subsection{Numerical reconstruction}

The \textsc{lLK} method applied to thermoacoustic computed tomography reads as
\begin{equation} \label{eq:lwkM}
    \x_{n+1} =
    \x_{n} -
    \om_n  \Mo_{[n]} ^\ast (\Mo_{[n]} (\x_{n}) - \y^{\delta,[n]}) \,,
\end{equation}
with
\begin{equation} \label{eq:skipM}
\om_n   = \om_n( \delta,  \y^\delta)   =
\begin{cases}
      1  & \norm{ \Mo_{[n]}(\x)  - \y^{[n], \delta}}
      > \tau \delta^{[n]}\\
      0  & \mbox{ otherwise }
\end{cases}\,.
\end{equation}

\begin{figure}
\begin{center}
\includegraphics[width = 0.48\textwidth, height = 0.4 \textwidth]{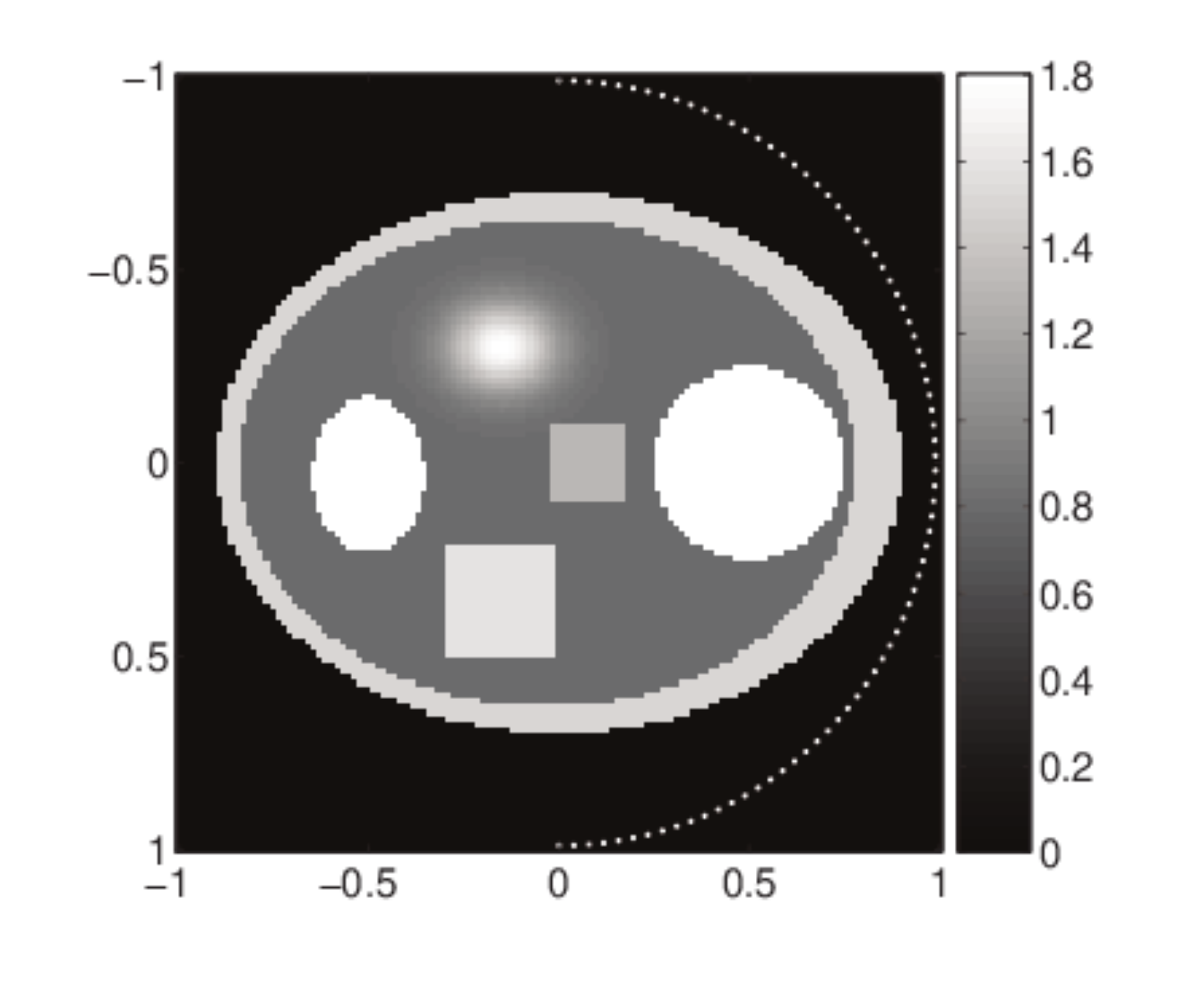}
\includegraphics[width = 0.48\textwidth, height = 0.4 \textwidth]{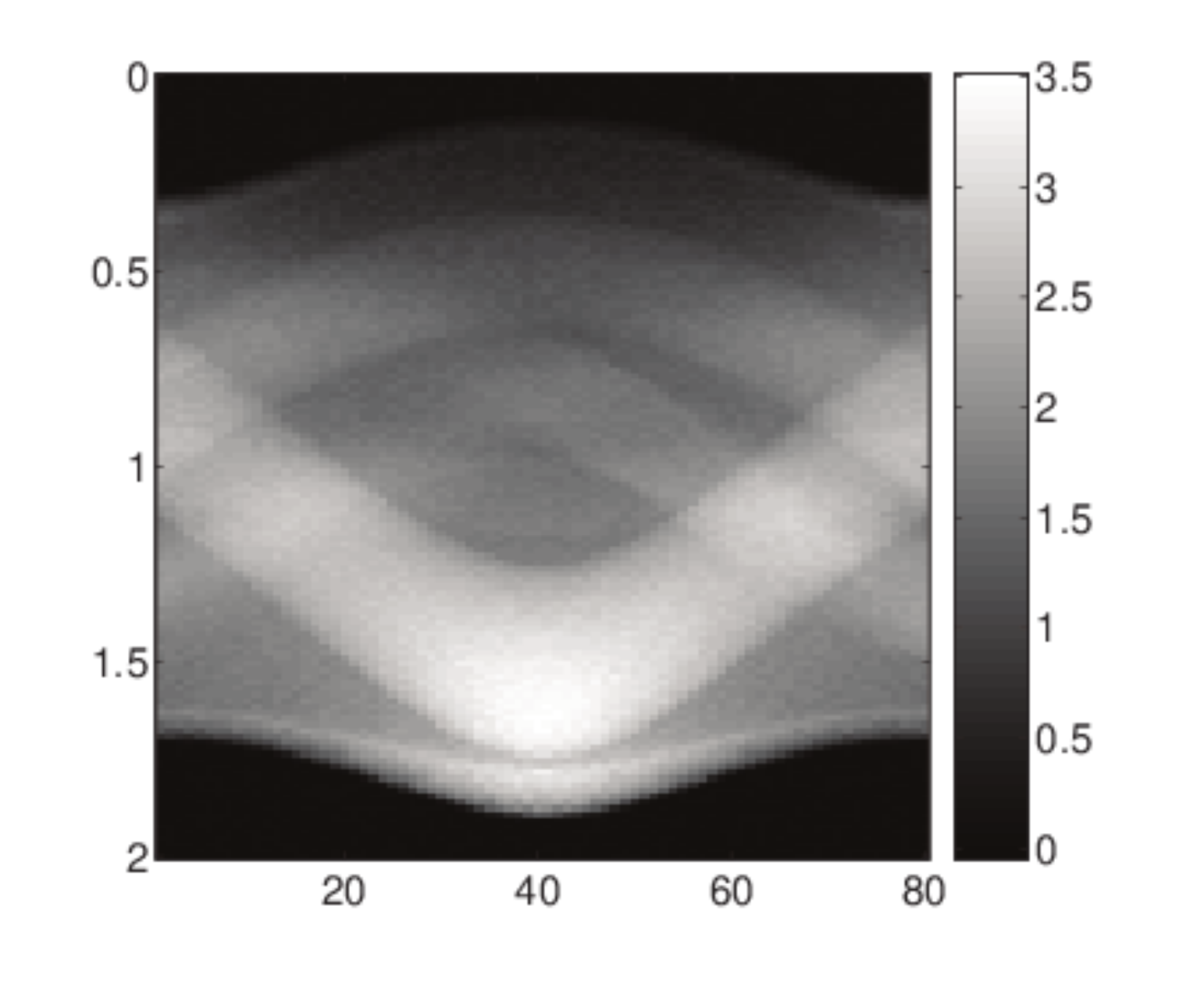}
\caption{The left picture shows the phantom to be reconstructed, where the white dots
indicate the locations of the detectors. The corresponding data
$(\Mo_i(\x))_i$ is depicted in the right image.}
\label{fg:tat-original}
\end{center}
\end{figure}

In the numerical implementation the spaces $L^2(B_R)$ and $L^2( [0,2R], t\dd t)$
are approximated by the linear spans of piecewise linear splines. Each spline is represented
by vectors in $\R^{M \times M}$ and $\R^M$, respectively.
For the numerical evaluation of $\Mo_i$ the integration over $S^1$ in (\ref{eq:smean}) is performed
with the trapezoidal quadrature formula.
This requires $\mathcal O(M)$ floating point operations (FLOPS) for any radius.
Therefore, one iteration in the \textsc{eLK} method according to \req{lwkM}, \req{skipM}
requires $\mathcal O(M^2)$ FLOPS.  In thermoacoustic tomography  $M = \mathcal  O(N)$, and the
numerical effort for performing a complete \textit{Landweber--Kaczmarz cycle}
is $\mathcal  O(N^3)$. This is the same complexity that is needed to perform one
step in the Landweber iteration.

\begin{figure}
\begin{center}
\includegraphics[width = 0.48\textwidth, height = 0.4 \textwidth]{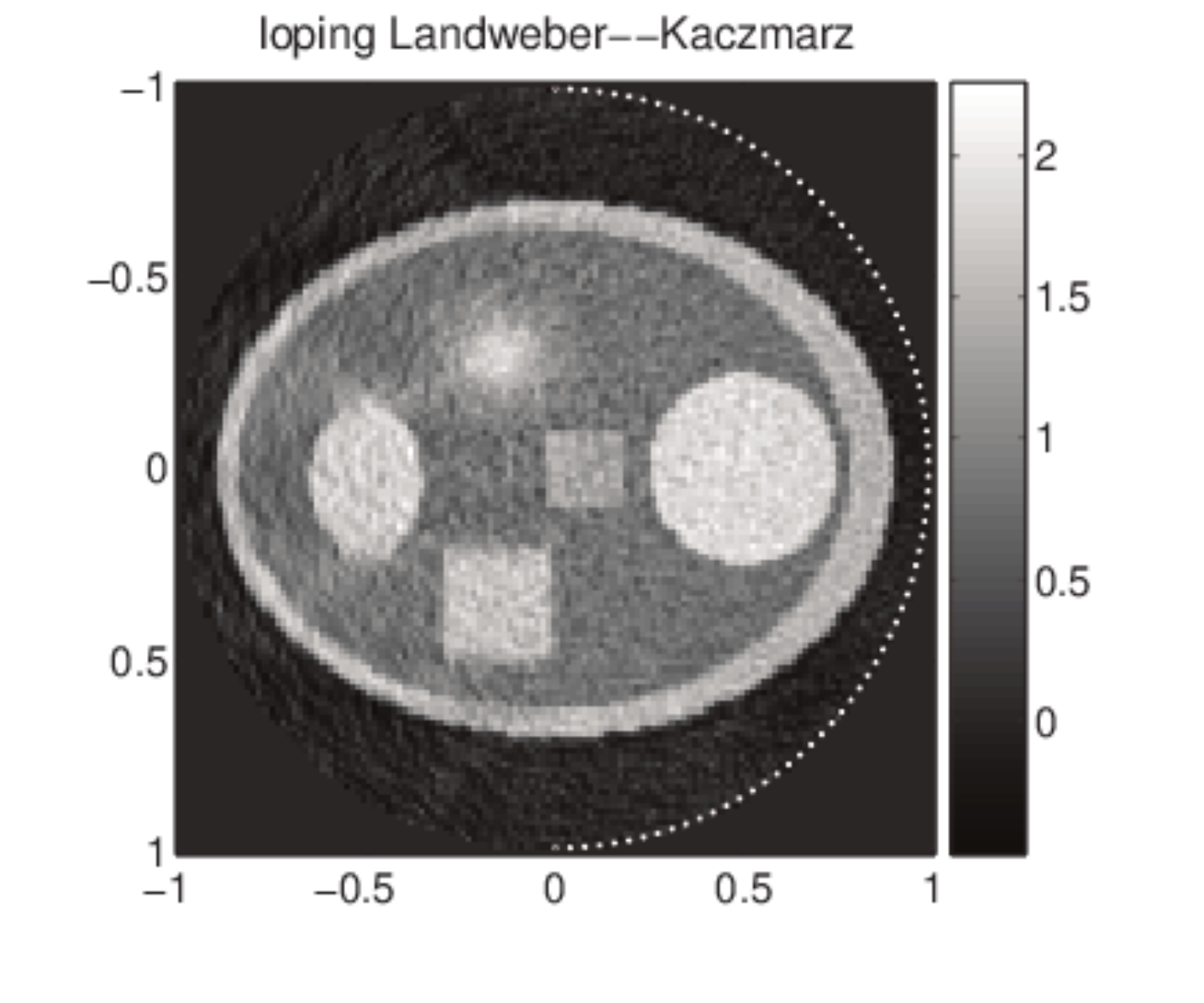}
\includegraphics[width = 0.48\textwidth, height = 0.4 \textwidth]{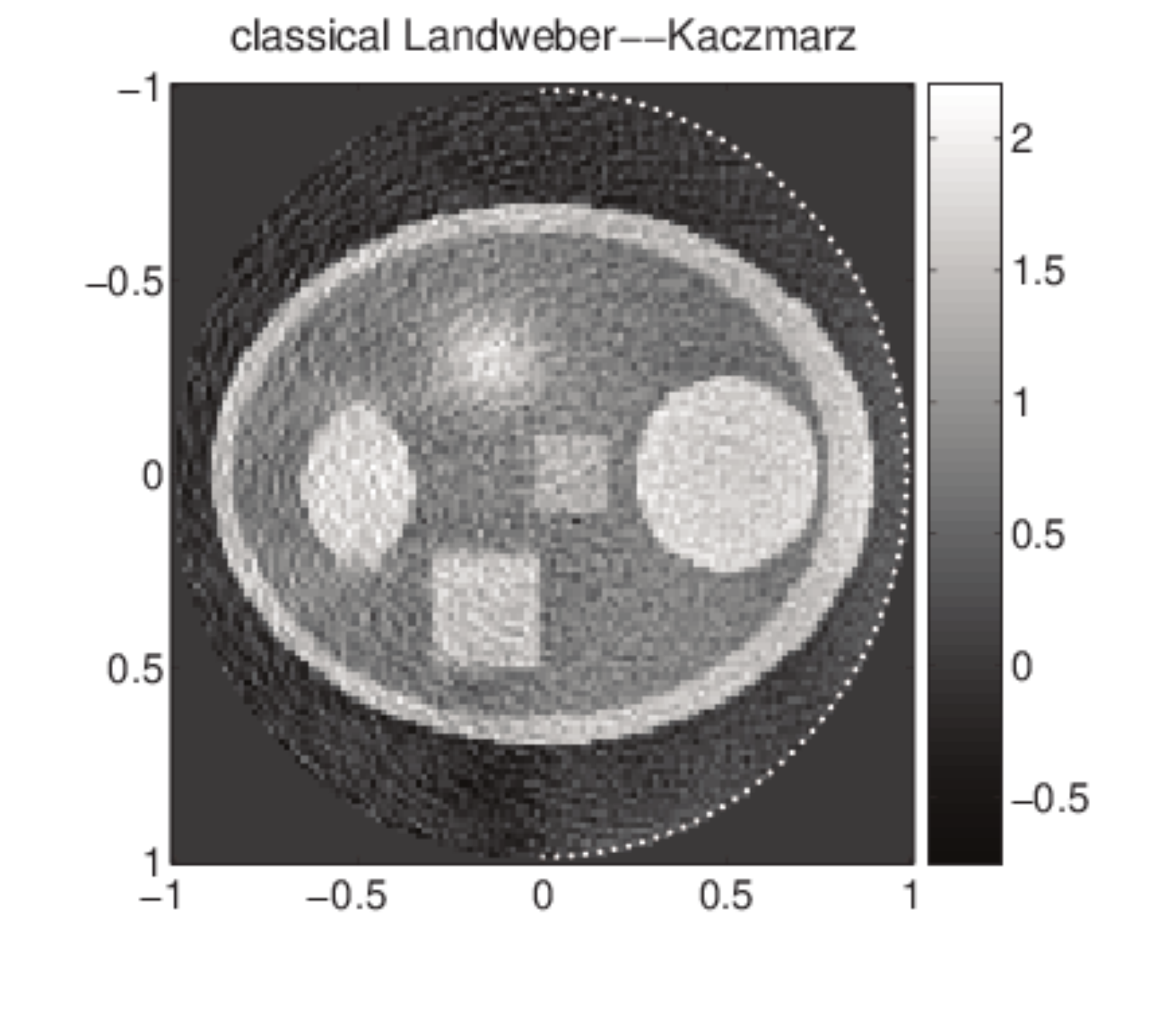}
\caption{Numerical reconstruction of the phantom depicted in Figure \ref{fg:tat-original}.
The left picture shows the regularized solution $\x_{n^\ast_\delta}$
with the \textsc{lLK} method \req{lwkM}, \req{skipM}. The right picture shows
the reconstruction with the \textsc{LK} method using the same number of
$n^\ast_\delta$ of iterations.}
\label{fg:tat-recon}
\end{center}
\end{figure}

\begin{figure}
\begin{center}
\includegraphics[height = 0.3 \textwidth]{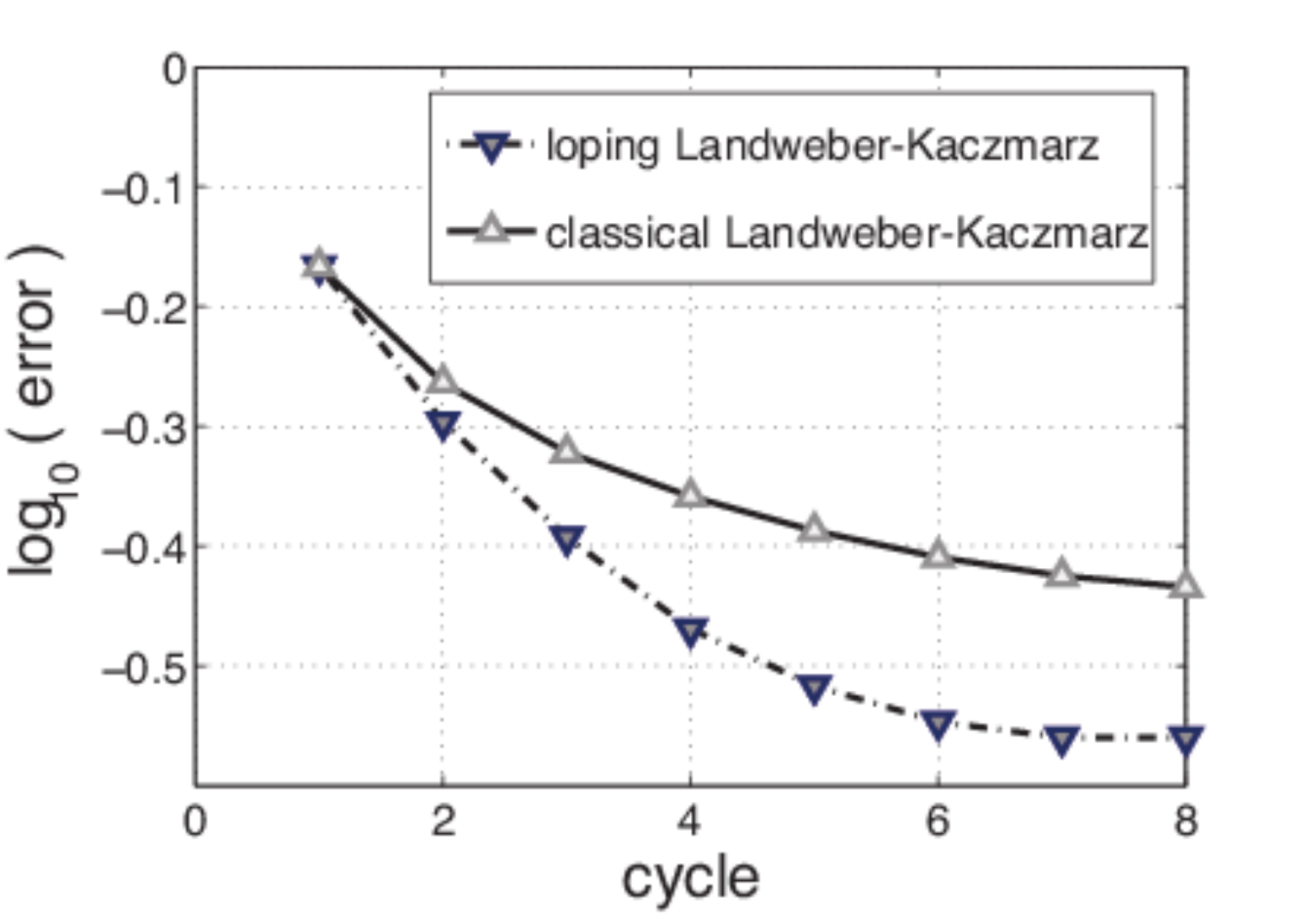}
\includegraphics[height = 0.3 \textwidth]{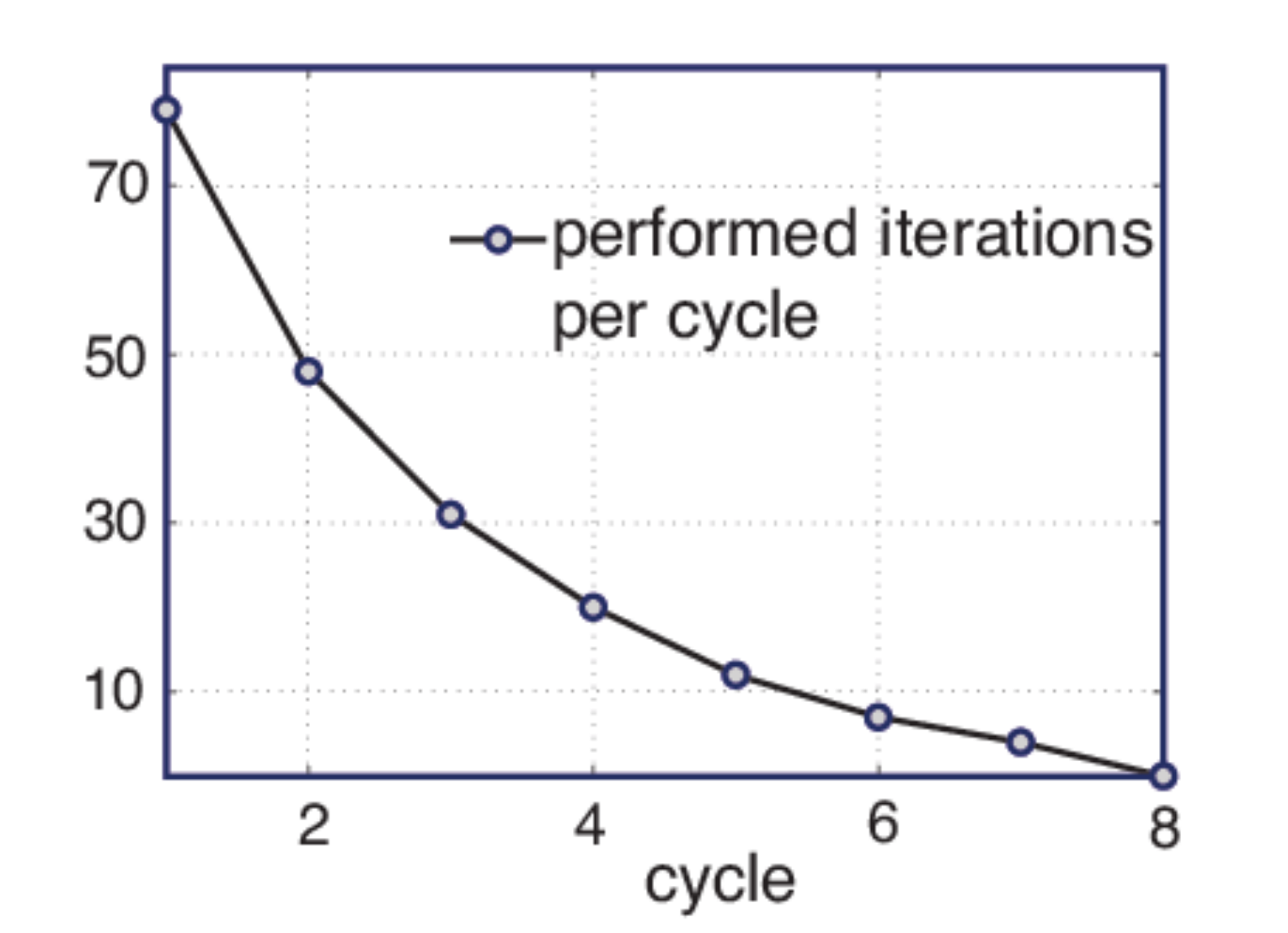}
\caption{The left image shows the decrease of the error using
the \textsc{lLK} method \req{lwkM}, \req{skipM} and the \textsc{LK} method. 
The right picture shows the actually performed number of iterations within a 
cycle of the \textsc{lLK} method. Within the $8$--th cycle all $\omega_{n} = 0$ 
and the iteration terminates.}
\label{fg:tat-steps}
\end{center}
\end{figure}

In the following numerical examples we consider $N=80$ measurements, where
the centers $\xi_i = R( \sin(\pi i /N) \cos(\pi i/N) )$  are uniformly 
distributed on the semicircle 
$S^+ := \set{ (\xi_1, \xi_2) \in \partial B_R:  \xi_1 \geq 0}$.
The phantom  $x$,  shown in the right picture in Figure  \ref{fg:tat-original}, 
consists  of a superposition of characteristic functions and one Gaussian kernel. 
The data $\Mo_i(\x)$ was calculated via numerical integration and  $5\%$ 
uniformly distributed noise was added. 
Micro-local analysis predicts, that in such  limited angle situation certain 
details of $x$ outside the convex hull of $S^+$  cannot be  recovered in a   
stable way~\cite{LoiQui00, XuYEtAl04}. 
The numerical reconstruction with the  \textsc{lLK} method, implemented with $\tau = 2.0$, is depicted in Figure  \ref{fg:tat-recon}.   The stopping rule becomes active after 8 cycles.
For comparison purposes, the result of the \textsc{LK} method is plotted after 8 cycles.
Figure \ref{fg:tat-embed}  shows results obtained with the \textsc{eLK} method using the same phantom. We remind that the  iterates of the \textsc{eLK} method  are vectors $\mathbf{\x}_n=(\x^0_n, \dots \x^{N-1}_n)$. For the results depicted in Figure \ref{fg:tat-embed} we used the average
$\x_{n} := \sum_{i=0}^{N-1} \x^{i}_n$ over all components.

\begin{figure}
\begin{center}
\includegraphics[height = 0.4 \textwidth, width = 0.48\textwidth]{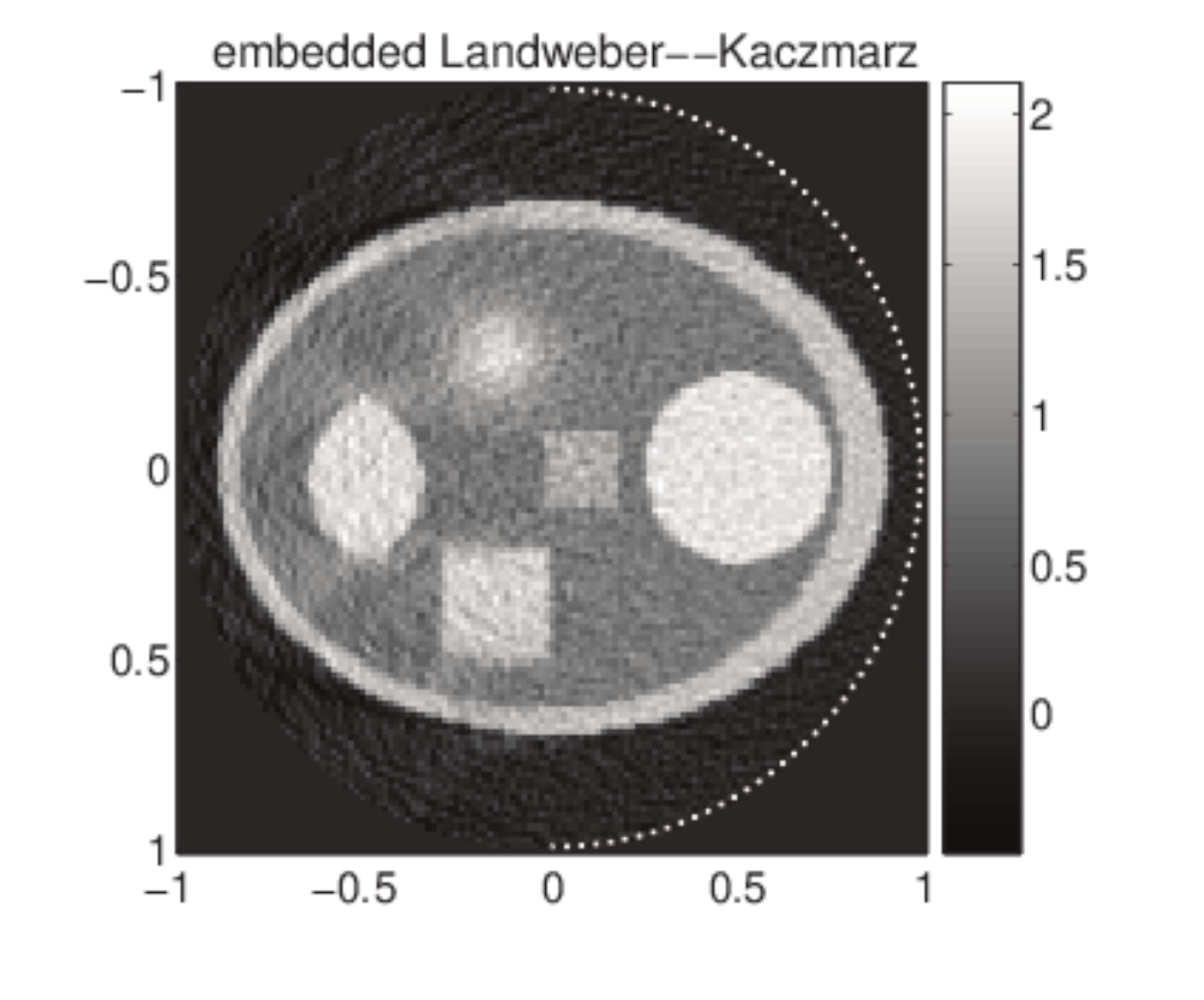}
\includegraphics[height = 0.4 \textwidth, width = 0.5\textwidth]{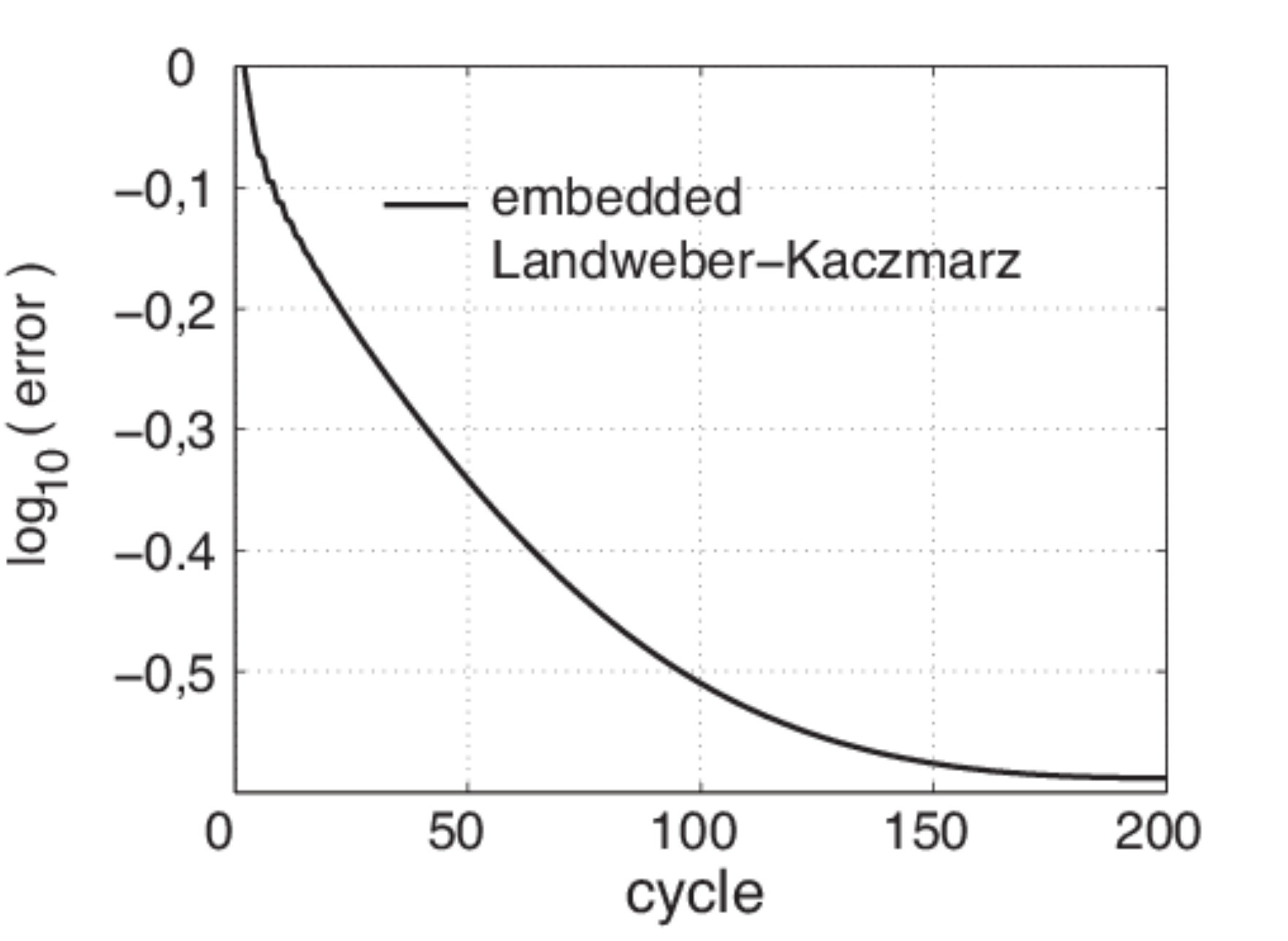}
\caption{Numerical reconstruction of the phantom depicted in
Figure \ref{fg:tat-original} with the \textsc{eLK} method after 230 iterations (left) and
decrease of the error (right).}
\label{fg:tat-embed}
\end{center}
\end{figure}

\subsection{Discussion}

The left image in Figure~\ref{fg:tat-steps} shows the decrease of error for both the
\textsc{LK} method and the \textsc{lLK} method. The number of actually computed Landweber steps in the \textsc{lLK} method
(number of iterations with $\omega_{n}=1$ within one cycle) of the \textsc{lLK} method is shown in the right
image in Figure \ref{fg:tat-steps}. The \textsc{lLK} method provides a better approximation of the exact
solution than the \textsc{LK} method. Moreover, since more than half of the iterates are loped
(see right picture in Figure \ref{fg:tat-steps}), the numerical effort is remarkably smaller.
%
%
Since in the \textsc{eLK} method an averaging over all components of the solution \textit{vector} $\f \x_{n_\star^\delta}$ is performed, the quality of the reconstruction is slightly better than for
the \textsc{lLK} method (and other tested regularization methods).
Shortcoming of the \textsc{eLK} method are the larger amount of memory and the larger number of iteration cycles needed.
%
%
%

\section{An inverse problem for semiconductors}
\label{sec:semicond}

In this section we investigate the solution of an inverse problem for nondestructive
testing of semiconductors. More precisely, we aim to recover the doping profile on
the basis of a simplified drift diffusion model from measurements given by the
voltage--current (VC) map  \cite{LMZ06, LMZ06a}.

The precise implantation of the doping profile is crucial for the desired
performance of semiconductor devices in practice. In order to minimize
manufacturing costs of semiconductors as well as for quality control,
there is substantial interest in replacing expensive laboratory testing by
numerical simulation for non--destructive evaluation.

\subsection{Mathematical modelling}

Let $\Omega \subset \mathbb{R}^d$, $d \in \set{1, 2, 3}$, be a domain representing the
semiconductor device and let $\nu$ denote the unit normal vector to its boundary $\di \Omega$.
The boundary of $\Omega$ is divided into two nonempty  disjoint parts $\partial \Omega_N$
and $\partial \Omega_D$.
The \textit{Dirichlet part} of the boundary $\partial \Omega_D$ models the Ohmic
contacts,  where  an electrostatic potential  is induced.
The \textit{Neumann part} of the boundary $\partial\Omega_N$
corresponds to insulating surfaces.

The \textit{doping profile}  $C: \Omega \to \R$ (unknown parameter)
models the preconcentration of ions in the crystal, which is produced by diffusion of different materials
into the silicon crystal and by implantation with an ion beam.
In particular,  $C = C_{+} - C_{-}$, where $C_{+}$ and $C_{-}$ are
concentrations of positive and negative ions,  respectively.
In those subregions of $\Omega$ in which the preconcentration of negative ions predominate
(P-region), we have $C< 0$. Analogously, we define the N-region,
where $C> 0$. The boundaries between the P-regions and N-regions
(where $C$ changes sign) are called the \textit{PN-junctions},
see Figure~\ref{fig:diode}.

\begin{figure}
\centering
\includegraphics[width=0.8\textwidth]{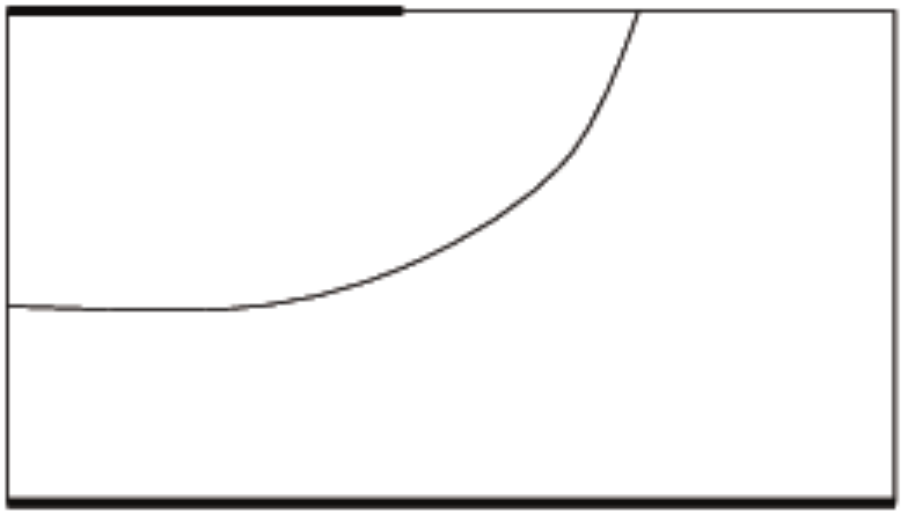}
\caption{P-N diode. Example of P-N junction.} \label{fig:diode}
\vskip-5cm \unitlength1cm
\centerline{
\begin{picture}(13,5)
\put(0.7,4){$\partial\Omega_N$}
\put(11.5,4){$\partial\Omega_N$}
\put(9,6.1){$\partial\Omega_N$}
\put(3,6){$\Gamma_1 \subset \partial\Omega_D$}
\put(6,1.3){$\Gamma_0 \subset \partial\Omega_D$}
\put(3.5,4.5){\bf P-region}
\put(7.5,3.0){\bf N-region}
\end{picture} }
\end{figure}

We review the \textit{linearized stationary unipolar mathematical model}, which is described
by a \emph{drift diffusion equation} (see~\cite{LMZ06, LMZ06a}):
\begin{eqnarray}
    \lambda^2 \, \Delta V  \label{eq:upolV-A}
   =
   \ e^{V} - C(\fs \xi)
   \,, \quad    & \mbox{in}\ \Omega
   \\
   V               \label{eq:upolV-B}
   = \ V_{\rm bi}  \,, \quad         & \mbox{on }  \partial\Omega_D \\
\nabla V \cdot \nu           \label{eq:upolV-C}
   = \ 0               \,, \quad        & \mbox{on }  \partial\Omega_N \, .
\end{eqnarray}
and
\begin{eqnarray} \label{eq:upolU-A}
    \nabla \cdot
    \left[ \mu_n \, e^V \, \nabla u \right]
    =
    0 \,, \quad                      & \mbox{in } \Omega
    \\ \label{eq:upolU-B}
    u
    =
    U(\fs \xi)    \,, \quad          & \mbox{on } \partial\Omega_D
    \\ \label{eq:upolU-C}
    \nabla u \cdot\nu
    = 0
    \,, \quad                        & \mbox{on } \partial\Omega_N
\end{eqnarray}

Here, the function $V:\Omega \to \R$ denotes the electrostatic potential
($- \nabla V$ is the electric field) and $u: \Omega \to \R$ corresponds, up
to an exponential transformation, to the concentration of free carriers of
negative charge (the concentration of free positive charge is assumed to
vanish; the reason why the model is called {\em unipolar}). The positive constant
$\mu_n$ is related to the mobility of electrons
and $\lambda^2$ is the Debye length.

At the  Dirichlet part of the boundary $\partial \Omega_D$
the potential $V$ and the concentration $u$ are prescribed:
$V_{\rm bi}$ is a given logarithmic function \cite{LMZ06a}, and the
function $U$ denotes the applied potential.
At the the Neumann part of the boundary $\partial\Omega_N$
zero current flow (\ref{eq:upolU-C}) and a zero electric field in the
normal direction (\ref{eq:upolV-C}) are prescribed.

In this section we are concerned with determining the {\em inverse doping problem}
for the VC map, which consists in the determination of the doping profile
$C$ in system (\ref{eq:upolV-A}--\ref{eq:upolU-C})
from measurements of the VC map
\begin{eqnarray*}
    \Sigma_C: & H^{3/2}(\partial\Omega_D)
    \to
    \mathbb R
    \, \\
    & U
    \mapsto
    \Sigma_C(U)
    :=
    \mu_n
    \int_{\Gamma_1}
    e^{V_{\rm bi}} \frac{\di u }{\di \nu } \, d\Gamma
\end{eqnarray*}
which maps an  applied potential $U$ at $\partial\Omega_D$ to  the \textit{current flow}
$\Sigma_C(U)$  through the contact $\Gamma_1$.
Here $(V,u)$ solve (\ref{eq:upolV-A}--\ref{eq:upolU-C}) and $\di u / \di \nu$ is the
normal derivative of $u$. For results on existence and uniqueness of
$H^1$-solutions of system (\ref{eq:upolV-A}--\ref{eq:upolU-C}),
we refer to \cite{LMZ06}.

Since systems (\ref{eq:upolV-A}--\ref{eq:upolV-C}) and
(\ref{eq:upolU-A}--\ref{eq:upolU-C}) are decoupled, we can
split the inverse problem in two parts:
First, the inverse problem of identifying the parameter
$\x(\fs \xi) := e^{V(\fs \xi)} $ in (\ref{eq:upolU-A}--\ref{eq:upolU-C})
from measurements of the Dirichlet to Neumann (DN) map
\begin{equation}\label{eq:DN-map}
\begin{aligned}
   \Lambda_\x :
   & H^{3/2}(\partial\Omega_D) \to \mathbb \R
   \\
  U \mapsto &
  \Lambda_\x(U) :=
   \mu_n
   \int_{\Gamma_1}
    \x \, \frac{\partial u}{\partial \nu}
   \, d\Gamma
\end{aligned}
\end{equation}
is solved. The second step consists in the determination of the doping profile
from \req{upolV-A}, namely $C = \x - \lambda^2 \Delta (\ln \x )$.

Notice that the second step is related to twice  numerical
differentiation which is mildly ill-posed (see
\cite{EngHanNeu96}). On the other hand, the first step corresponds
to the inverse problem of impedance tomography (EIT) with partial
data, which is known to be severely ill-posed. (A review of EIT
can be found in~\cite{boc02}.) For the sake of clarity of
presentation we will focus on the problem of identifying $\x$ in
the first step above.

\subsection{Abstract formulation in Hilbert space}

Due to the nature of the practical experiments, some restrictions on
the data have to be taken into account:

\begin{enumerate}
\item
The voltage profiles $U \in H^{3/2}(\partial\Omega_D)$ must
satisfy $U |_{\Gamma_1} = 0$.

\item $\x$ has to be determined from a finite number of
measurements, i.e. from the given data
\begin{equation} \label{eq:data-semicond}
    F_i(\x) := \Lambda_\x(U_i) \in Y:= \mathbb R\,,
    \quad i \in\set{0,\dots, N-1} \,,
\end{equation}
where $U_i \in H^{3/2}(\partial\Omega_D)$
are prescribed voltage profiles satisfying $U_i |_{\Gamma_1} = 0$.
\end{enumerate}

\begin{figure}
\centerline{
\includegraphics[width=0.49\textwidth]{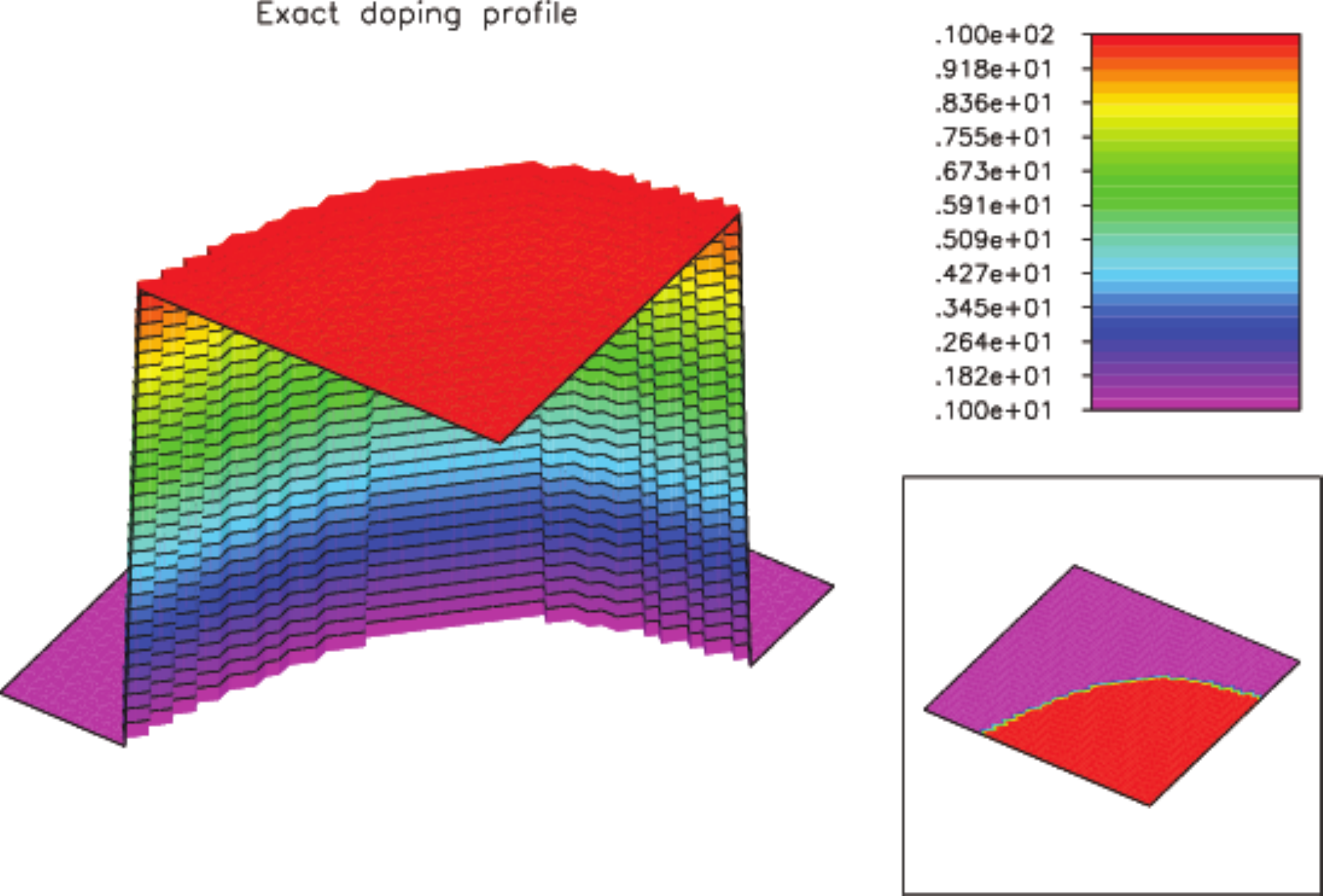} \hskip1cm
\includegraphics[width=0.49\textwidth]{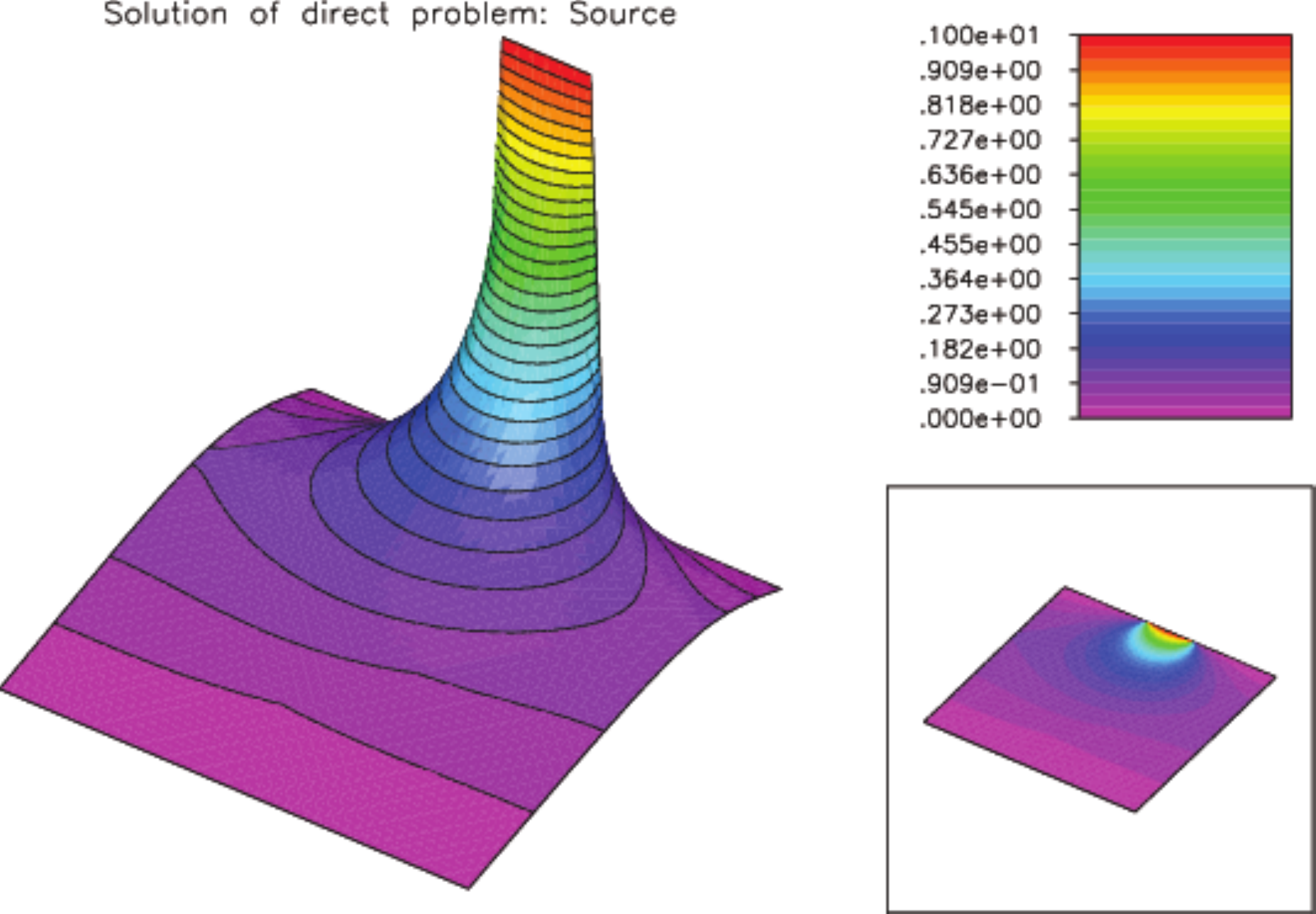} }
\centerline{\hfill (a) \hspace{0.4\textwidth}  (b) \hfill}
\caption{\small Picture (a) show the doping profile to be identified.
Picture (b) shows a typical voltage profile $U_j$ and the corresponding
solution $u$ of (\ref{eq:upolU-A}--\ref{eq:upolU-C}).} \label{fig:idop-setup}
\end{figure}

\begin{figure}
\centerline{
\includegraphics[width=0.49\textwidth]{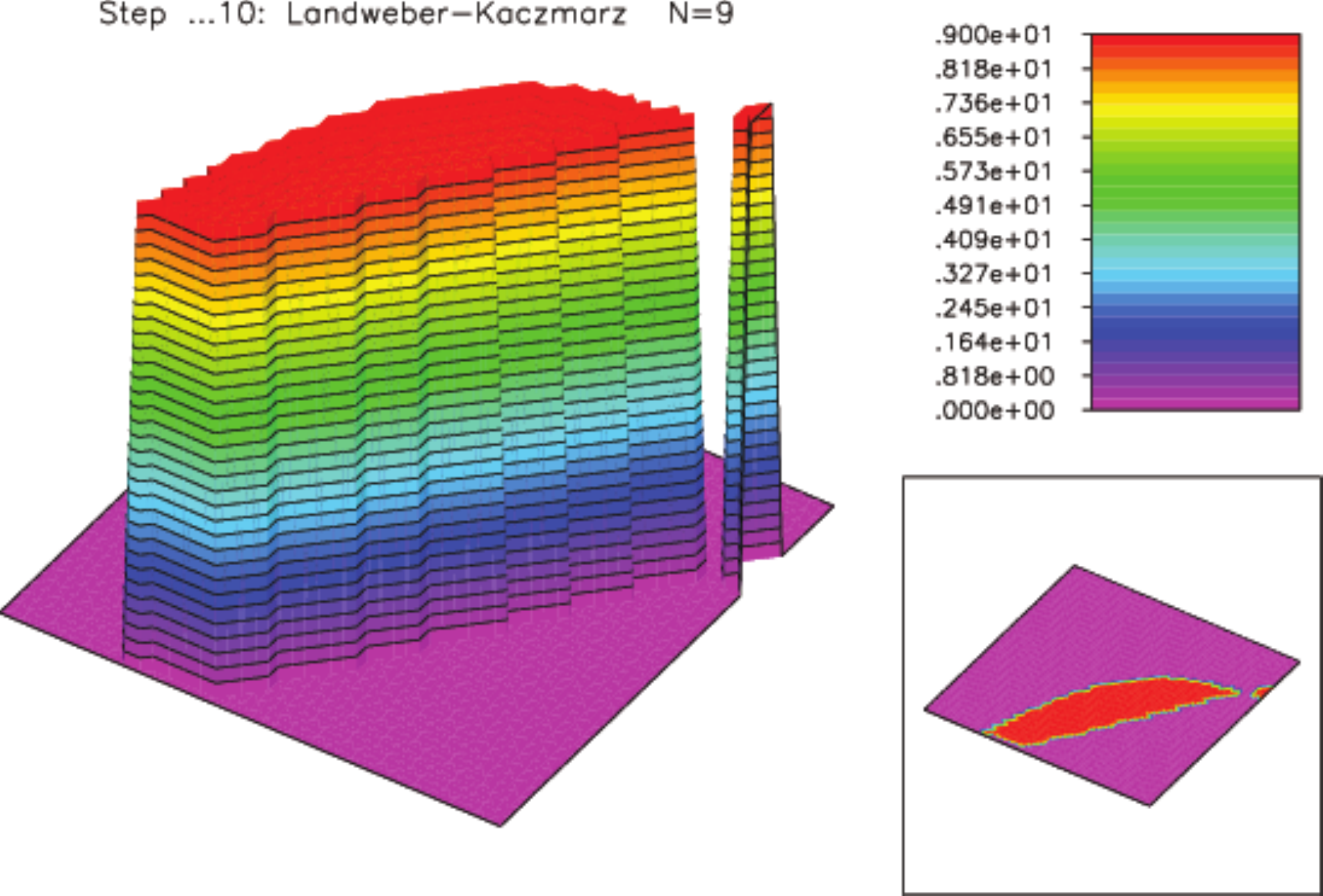}
\includegraphics[width=0.49\textwidth]{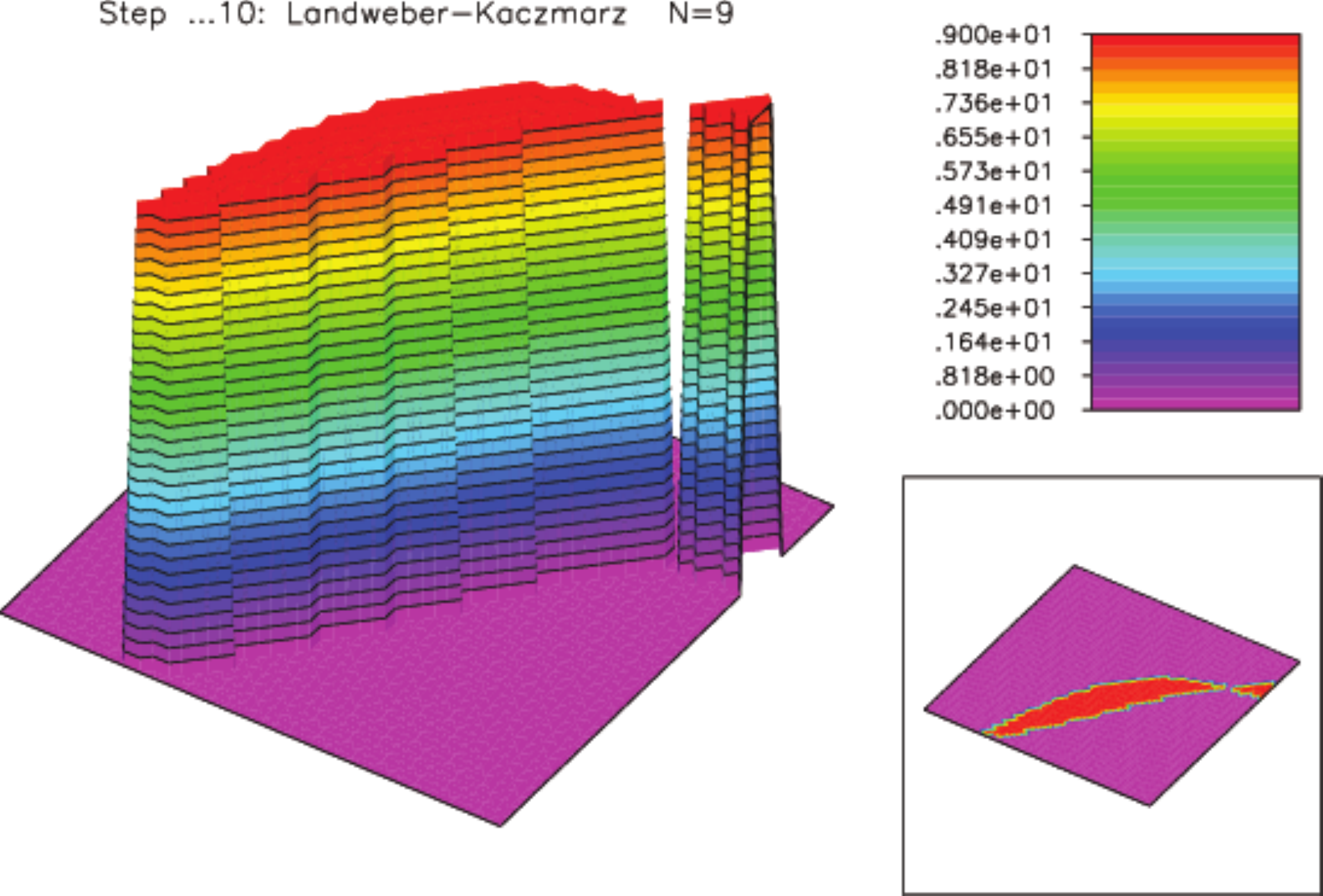}  }
\centerline{
\hfill (a1) \hspace{0.4\textwidth}  (b1) \hfill  \medskip}
\centerline{
\includegraphics[width=0.49\textwidth]{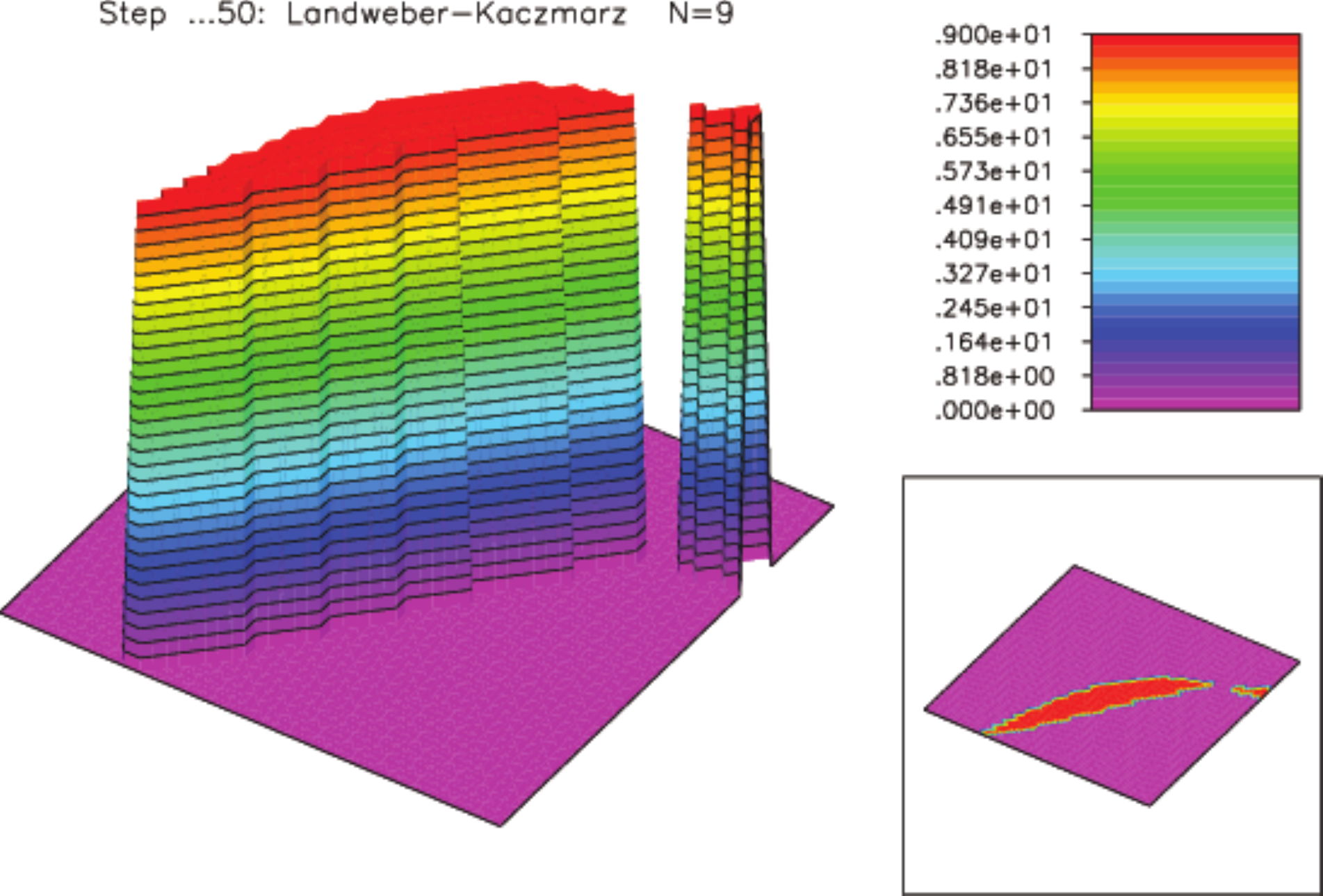}
\includegraphics[width=0.49\textwidth]{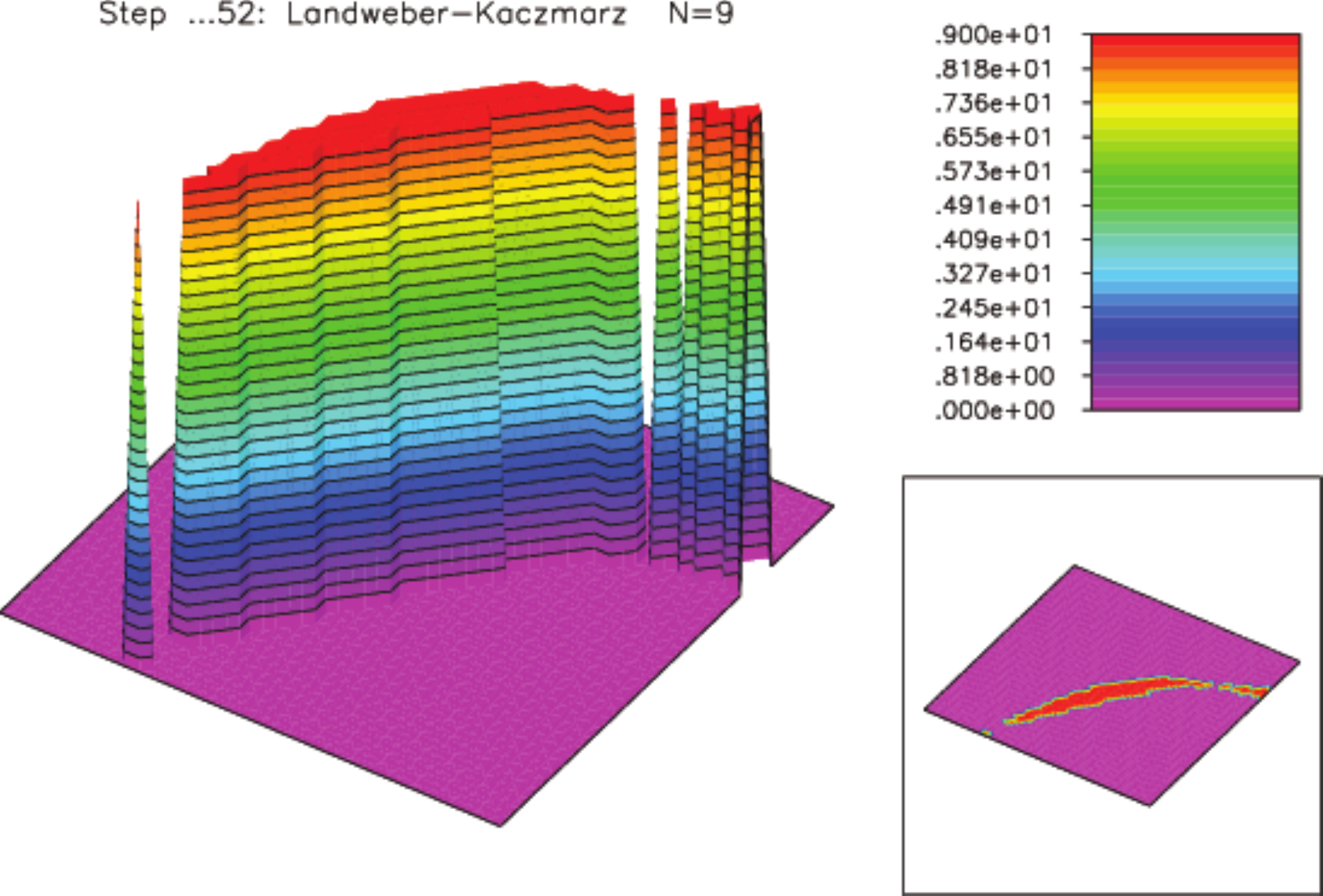} }
\centerline{
\hfill (a2) \hspace{0.4\textwidth}  (b2) \hfill  \medskip}
\centerline{
\includegraphics[width=0.49\textwidth]{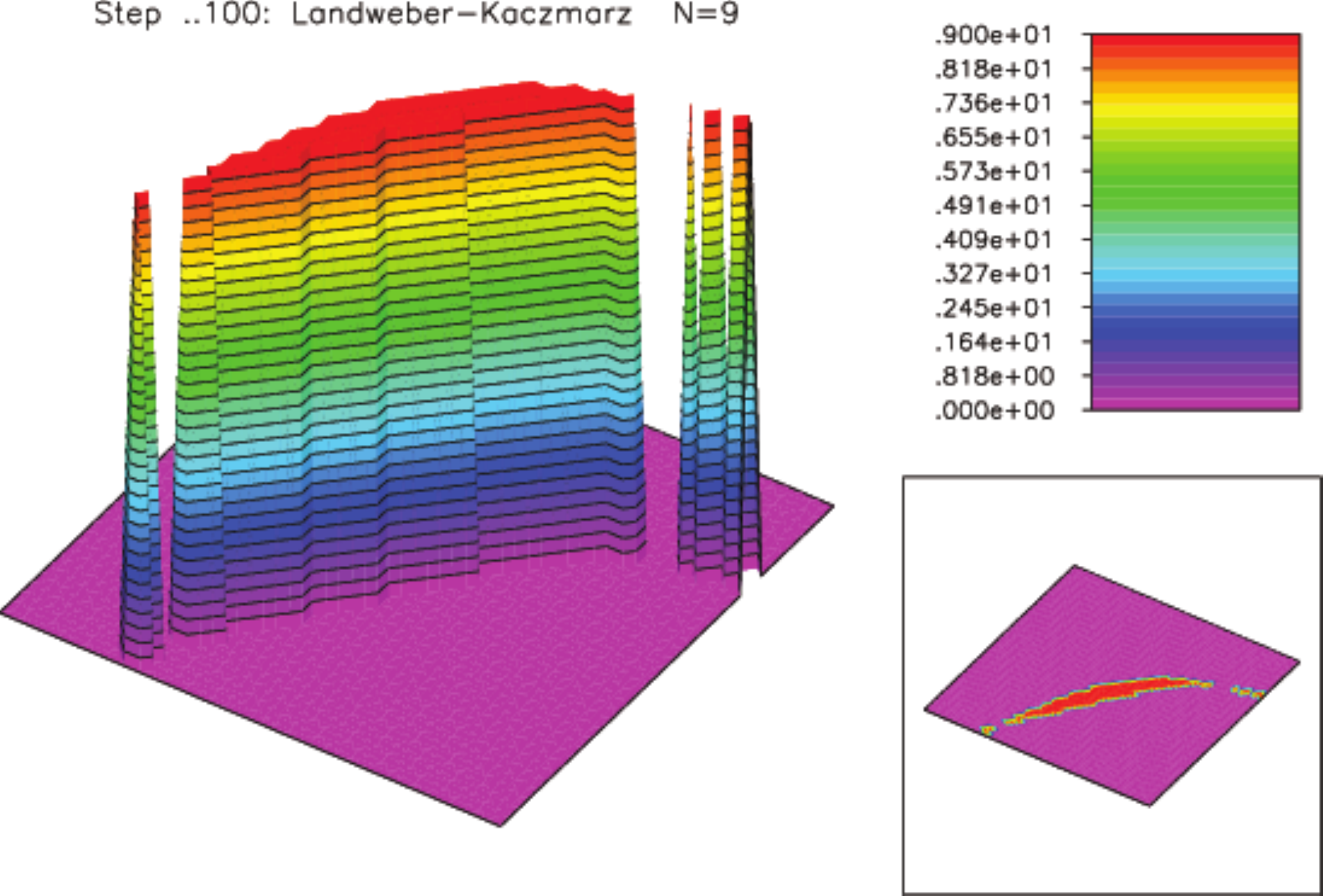}
\hspace{0.05\textwidth}
\includegraphics[width=0.40\textwidth]{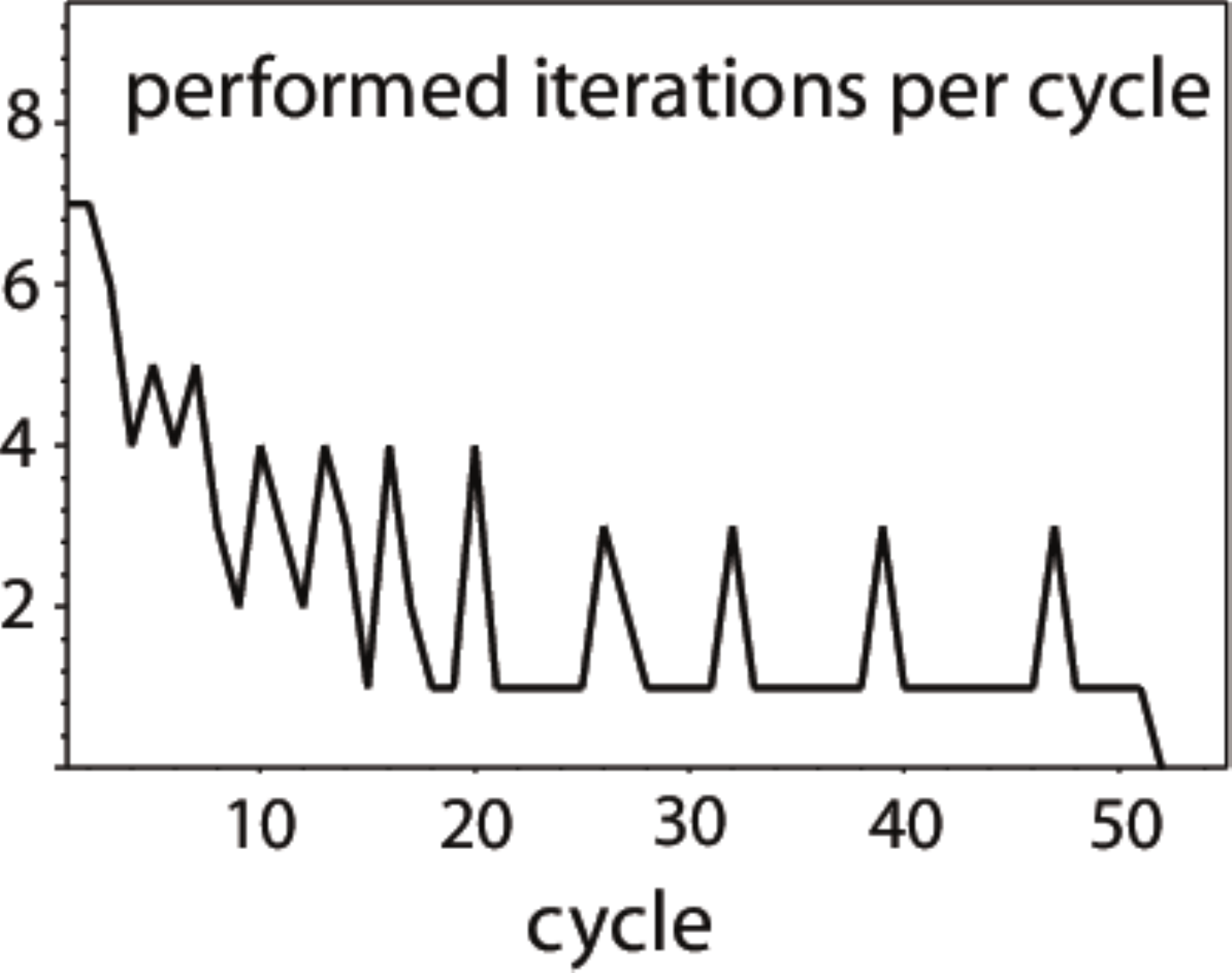}
}
\centerline{ \hfill (a3) \hskip0.45\textwidth  (c) \hfill }
\caption{Numerical reconstruction of the p-n junction in
Figure~\ref{fig:idop-setup}~(a).
Pictures (a1--a3) show the decrease of the iteration error for the \textsc{LK} method. Pictures
(b1, b2) show the corresponding evolution for the \textsc{lLK} method, and picture (c) the
number of computed Landweber steps in each cycle of the \textsc{lLK} method.}
\label{fig:idop-iter}
\end{figure}

In order to apply the abstract framework of \cite{HLS06}  we write the inverse 
doping problem as a system of  operator equations
\[
     \F_i(\x) = y^{\delta, i} \,, \quad i = 0,\dots, N-1 \,.
\]
Here $\x \in L^2(\Omega) =: X$ is the unknown parameter, $y^{\delta, i} \in \R$
denotes measurement data and $\F_i: X \to Y$ are the parameter to output maps,
with domain of definition
\[
    D(\F_i) := \{ \x \in L^\infty(\Omega) :  \, 0 < \x_{\rm min}
   \le \x  \le \x_{\rm max}, \mbox{ a.e.} \} \, .
\]

Although the operators $\F_i$ are Fr\'echet differentiable, they do not satisfy
the tangential cone condition \cite[Eq. (13)]{HLS06}. Therefore, the
convergence results derived in \cite[Section~2]{HLS06} cannot be
applied.

\subsection{Numerical reconstruction}

In the following numerical examples we assume that $N = 9$
Dirichlet--Neumann pairs $(U_i, F_i(\x) )$ of measurement data are available.
The domain $\Omega \subset \mathbb R^2$ is the unit
square, and the boundary parts are defined as follows
\begin{equation}
\begin{aligned}
    \Gamma_1  &:=   \set{ (s,1) : \ s \in (0,1)    } \, ,
    \\
    \Gamma_0  &:=   \set{ (s,0) :   \ s \in (0,1)  } \, ,
    \\
    \partial\Omega_N
    &:=
    \set{ (0,t) : \ t \in (0,1) }
    \cup
        \set{ (1,t) : \ t \in (0,1) } \, .
\end{aligned}
\end{equation}
The fixed inputs $U_j$, are chosen to be piecewise constant functions
supported in $\Gamma_0$
\[
  U_j(s) \ := \ \left\{ \begin{array}{rl}
      1, & |s - s_j| \le h \\
      0, & {\rm else} \end{array} \right.
\]
where the points $s_j$ are uniformly distributed on $\Gamma_0$ and
$h = 1/32$. The doping profile to be reconstructed is shown in Figure~
\ref{fig:idop-setup}~(a). In Figure~\ref{fig:idop-setup}~(b) a typical
voltage profile $U_j$ (applied at $\Gamma_0$) is shown as well as the
corresponding solution $u$ of (\ref{eq:upolU-A}--\ref{eq:upolU-C}).
In these pictures, as well as in the forthcoming ones, $\Gamma_1$ appears
on the lower left edge and $\Gamma_0$ on the top right edge (the origin
corresponds to the upper right corner).

In Figure~\ref{fig:idop-iter} we show the evolution of the iteration error for both
the \textsc{LK} method (a1--a3) and the \textsc{lLK} method (b1, b2). The number of actually computed
Landweber steps within each cycle of the \textsc{lLK} method is shown in
Figure~\ref{fig:idop-iter}~(c).

The stopping rule for the \textsc{lLK} method with
$\tau = 2.0$ is satisfied after 52 cycles.
In Figures~\ref{fig:idop-iter}~(b1, b2) one can see the iteration error
after 10 and 52 cycles. For comparison purposes, the iteration error for
the \textsc{LK} method is plotted in Figures~%
\ref{fig:idop-iter}~(a1--a3) after 10, 50 and 100 cycles.

\subsection{Discussion}

The \textsc{LK} method requires almost twice as much cycles as the
\textsc{lLK} method in order to obtain a similar accuracy. The
efficiency of the \textsc{lLK} method becomes even more evident
when we compare the total number of actually performed iterations.
Each cycle of the \textsc{LK} method requires the computation of 9
iterations, while in the \textsc{lLK} method the number of
actually performed iterations is very small after a few iteration
cycles. As one can see in Figure \ref{fig:idop-iter} (c),  no more
than 3 Landweber steps are computed after the $20$--th cycle of
the \textsc{lLK} method. In total for the computation of the
approximation in Figure \ref{fig:idop-iter} (a3), 900 iterations
are needed, while the approximation in (b2) requires the
computation of only 146 iterations.

\section{Reconstruction of transducer pressure fields from Schlieren images}
\label{sec:sch}

This section is concerned with the problem of reconstructing three dimensional
\textit{pressure fields} generated by a medical ultrasound transducer
from \textit{Schlieren data}.

The data are collected with a Schlieren optical system where in
the experiment an acoustic pressure is emitted into a  water tank (see Figure \ref{OpticalSystem}). The
Schlieren optical system outputs the intensity pattern of light
passing through the tank which is proportional to the square of
the line integral of the pressure along the light path.

For practical aspects on the realization of Schlieren systems and more background information on
such measurement devices we refer
to \cite{Bre98, ChaPel95, HanZan94, LedZan99, PitGreLuKin94, RamNat35, ZanKad94}.

\begin{figure}
\begin{center}
\includegraphics[width=0.7\textwidth]{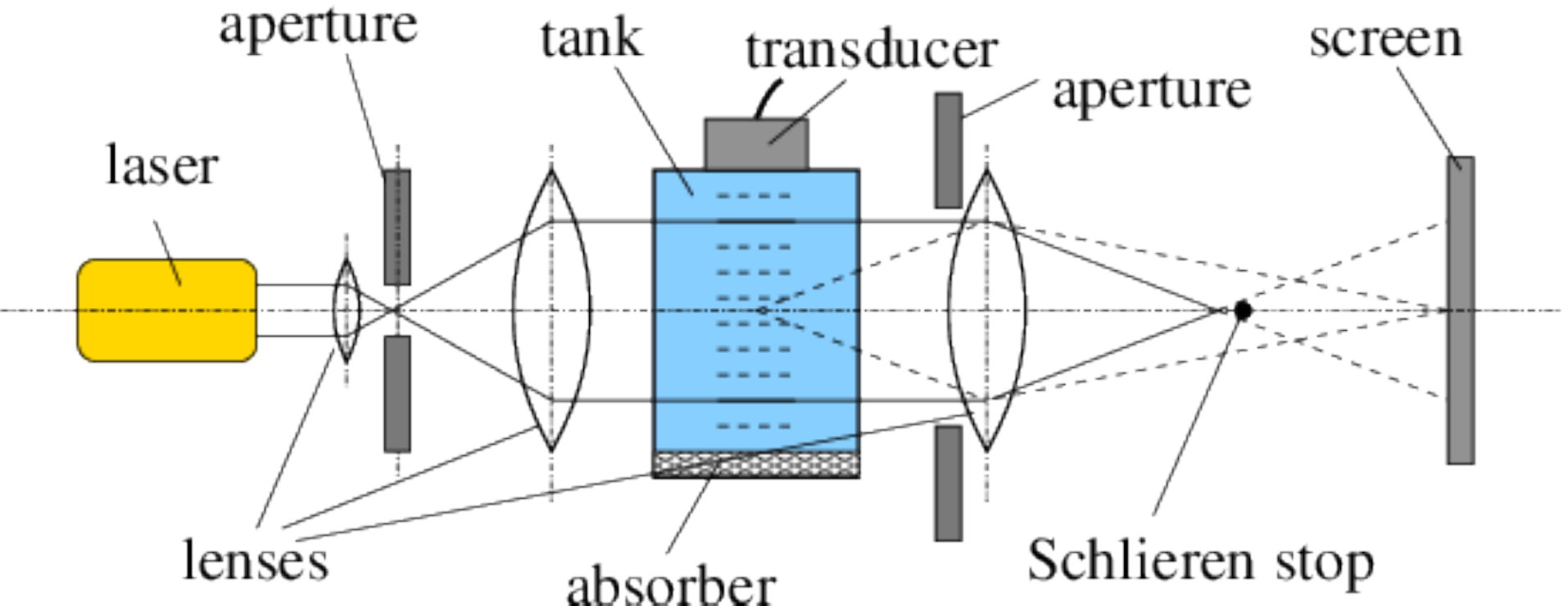}
\end{center}
\caption{Schlieren optical system. The pressure field
in the tank is reconstructed from Schlieren data
acquired at different angles $\sigma_i$, $i \in \set{0, \dots, N-1}$
of rotation of the schlieren system.}
\label{OpticalSystem}
\end{figure}

\subsection{Mathematical modelling}

Let $H$ and $D:=\{\fs \xi\in\R^2\,: \abs{\fs \xi} < 1\}$ denote the \emph{height} and an
arbitrary \emph{cross-section} of the tank, respectively.
The pressure (to be reconstructed) within the tank is represented by a function
$p:  D \times [ 0, H ] \to \R$.
At each \textit{recording angle}  $\fs \sigma_i \in S^1$, $i = 0,\dots, N-1$
the Schlieren system outputs
\begin{equation*}
    P_i ( s, z ) :=
        \left(
            \int_{\R}
            p( s \fs \sigma_i + r \fs \sigma_i^\bot, z )
        \, dr
        \right)^2\;, \quad  z \in [0,H] \,, \ s \in I\,,
\end{equation*}
where $I := [-1,1]$ and the parameter $s$ corresponds to the signed distance of
the line $L_i(s) := s \fs \sigma_i + \R \fs \sigma_i^\bot$ from the
origin in $D$,  see Figure \ref{fig_domain}.
In the following the height parameter $z$ is fixed.  We aim to reconstruct the
\textit{pressure function} $\x: D \to \R$  defined by  $\x(\fs \xi) := p(\fs \xi, z) $
from data $F_i(\x)(s)  := P_i(s,z)$.

%
\begin{figure}
\begin{center}
\includegraphics[height=5.5cm,angle=0]{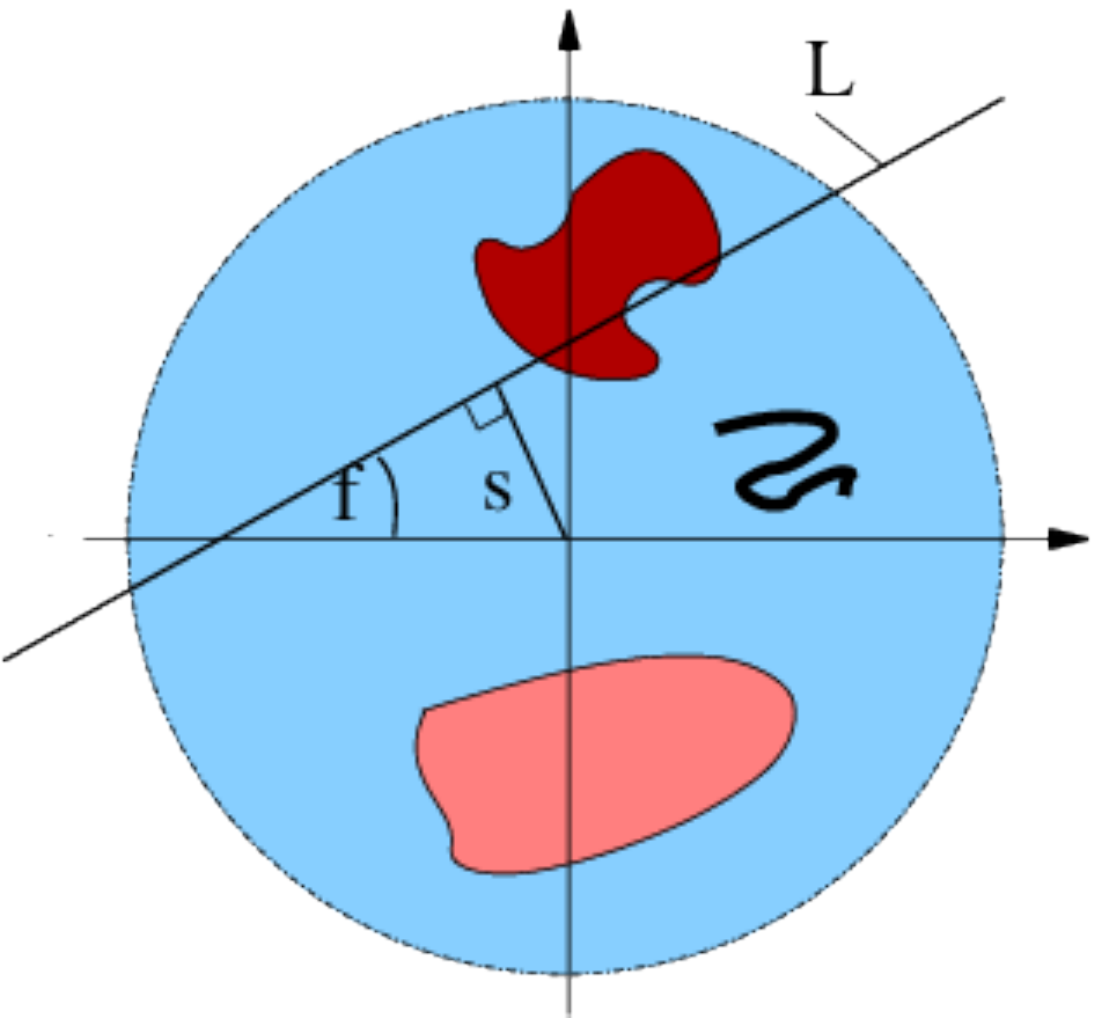}
\end{center}
\caption{Cross section of the tank.
The Schlieren system outputs squares of integrals over the lines $L_i(s)$,
with unit normal $\fs \sigma_i = (\cos \varphi, \sin \varphi)$ and signed
distance $s$ from the origin.}
\label{fig_domain}
\end{figure}
%

Now \textit{Schlieren tomography} can be formulated as the problem of solving the system of
operator equations
\[
    F_i(\x) = \y^{\delta, i} \,, \quad i = 0,\dots, N-1 \,,
\]
for given  noisy data $\y^{\delta,i}$.

\subsection{Abstract formulation in Hilbert--space}

Let
\begin{equation*}
    \Ro_i: C_0^\infty (D) \to C_0^\infty ( I ): \quad
    \x  \mapsto
    \Ro_i(\x) :=
    \left(
        s  \mapsto
                \int_{\R}
                    \x( s \fs \sigma_i + r \fs \sigma_i^\bot )
                \, dr
    \right)
\end{equation*}
denotes the \textit{Radon transform}, and $F_i = \Ro_i^2$ denotes the \emph{Schlieren transform}.

\begin{theorem}
The operators $\F_i$, $ i \in \set{0, \dots, N-1}$ can be continuously extended on
$H_0^1(D)$:
\[
    \F_i: H_0^1(D)  \to     L^2(I) \;.
\]
Moreover, $\F_i$ is Fr\'echet differentiable and the Fr\'echet derivative
of $\F_i$ at $\x$ is given by
\begin{equation}\label{eq:sch-derivative}
\begin{aligned}
    \F_i^\prime[\x](h)
    &=
    2 \Ro_i(\x) \Ro_i(h)\,.
\end{aligned}
\end{equation}
\end{theorem}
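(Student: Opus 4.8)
The plan is to build the extension of $\F_i=\Ro_i^2$ in two stages, first extending the Radon transform $\Ro_i$ and then squaring. The key observation is that on a \emph{fixed} line direction $\fs\sigma_i$, the map $\Ro_i$ is essentially the one-dimensional integral transform $\x\mapsto\int_\R\x(s\fs\sigma_i+r\fs\sigma_i^\bot)\,dr$, and for functions supported in the unit disc $D$ the inner integral is over a bounded interval of length at most $2\sqrt{1-s^2}$. So the first step is to establish a bound of the form $\norm{\Ro_i(\x)}_{L^2(I)}\le c\,\norm{\x}_{H_0^1(D)}$ for $\x\in C_0^\infty(D)$. I would get this by writing, for fixed $s$, the value $\Ro_i(\x)(s)$ as an integral over the chord, applying Cauchy--Schwarz to pull out the chord length, and then integrating in $s$; this yields $\norm{\Ro_i(\x)}_{L^2(I)}\le c\,\norm{\x}_{L^2(D)}$, i.e.\ $\Ro_i$ is already bounded $L^2(D)\to L^2(I)$. (The $H_0^1$ norm is only needed later for the nonlinear step, so it is harmless to prove the stronger $L^2$ bound here.) By density of $C_0^\infty(D)$ in $L^2(D)$ we obtain a bounded linear extension $\Ro_i:L^2(D)\to L^2(I)$, and by restriction a bounded $\Ro_i:H_0^1(D)\to L^2(I)$.

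For the extension of $\F_i$ itself and its Fr\'echet differentiability, the essential analytic input is that pointwise multiplication $L^2(I)\times L^2(I)\to L^1(I)$ is not enough --- we need the product $\Ro_i(\x)\Ro_i(h)$ to land in $L^2(I)$. This is where the Sobolev regularity of the domain space enters: I would invoke the one-dimensional Sobolev embedding together with a smoothing property of the Radon transform, namely that $\Ro_i$ maps $H_0^1(D)$ boundedly into $H^{1/2}(I)\hookrightarrow L^\infty(I)$ (or at least into $L^4(I)$, which is all that is needed). Granting $\Ro_i:H_0^1(D)\to L^\infty(I)$ bounded, the map $\x\mapsto\Ro_i(\x)^2$ sends $H_0^1(D)$ into $L^2(I)$ with $\norm{\Ro_i(\x)^2}_{L^2(I)}\le\norm{\Ro_i(\x)}_{L^\infty(I)}\norm{\Ro_i(\x)}_{L^2(I)}$, and the quadratic/bilinear structure makes differentiability automatic: for $\x,h\in H_0^1(D)$,
\begin{equation*}
    \F_i(\x+h)-\F_i(\x) = \Ro_i(\x+h)^2-\Ro_i(\x)^2
    = 2\Ro_i(\x)\Ro_i(h)+\Ro_i(h)^2\,,
\end{equation*}
using linearity of $\Ro_i$. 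The linear term $h\mapsto2\Ro_i(\x)\Ro_i(h)$ is bounded $H_0^1(D)\to L^2(I)$ by the same product estimate, and the remainder satisfies $\norm{\Ro_i(h)^2}_{L^2(I)}\le\norm{\Ro_i(h)}_{L^\infty(I)}\norm{\Ro_i(h)}_{L^2(I)}\le c\,\norm{h}_{H_0^1(D)}^2=o(\norm{h}_{H_0^1(D)})$, which is exactly the definition of Fr\'echet differentiability with derivative \req{sch-derivative}.

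The main obstacle is the smoothing estimate $\Ro_i:H_0^1(D)\to L^\infty(I)$ (or into $L^4(I)$). The naive bound from Cauchy--Schwarz only gives continuity into $L^2(I)$, which does not control the pointwise product; one genuinely needs to exploit that integrating in the $r$-variable gains half a derivative (the Fourier slice theorem: $\widehat{\Ro_i(\x)}(\rho)=\sqrt{2\pi}\,\widehat{\x}(\rho\fs\sigma_i)$, so a gradient bound on $\x$ translates into a weighted $L^2$ bound on $\widehat{\Ro_i(\x)}$ that is enough for $H^{1/2}(I)\hookrightarrow C(\bar I)$, using also the compact support in $s\in I$). I would either carry out this Fourier-side computation directly, or argue more elementarily: differentiating under the integral sign, $\frac{d}{ds}\Ro_i(\x)(s)=\int_\R(\nabla\x\cdot\fs\sigma_i)(s\fs\sigma_i+r\fs\sigma_i^\bot)\,dr$, so $\Ro_i(\x)\in H^1$ of a neighbourhood... wait, this needs care at the boundary of $D$ where the chord length is nonsmooth, so in practice I would fall back on a mollification/boundary-term argument or simply cite the classical mapping properties of the Radon transform on Sobolev scales. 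Everything else --- the density extension, the product estimates, the expansion of the square --- is routine once that single smoothing fact is in hand.
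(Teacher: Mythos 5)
Your proposal is correct in outline and shares the paper's quadratic-expansion step (the remainder is exactly $\Ro_i(h)^2=\F_i(h)$, hence $o(\norm{h}_{H^1})$ once a quadratic bound is available), but the analytic core is genuinely different. The paper uses no smoothing property of the Radon transform at all: it bounds $\norm{\Ro_i(\x)^2}_{L^2(I)}$ by two applications of Cauchy--Schwarz along the chords of $D$ (whose length is at most $2$), obtaining $\norm{\F_i(\x)}_{L^2(I)}^2\le 8\norm{\x}_{L^4(D)}^4$, and then invokes the two-dimensional Sobolev embedding $H^1(D)\hookrightarrow L^4(D)$; the same estimate (e.g.\ by polarization) gives boundedness of $h\mapsto 2\Ro_i(\x)\Ro_i(h)$. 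You instead put the Sobolev gain on the image side, proving $\Ro_i:H_0^1(D)\to L^4(I)$ or $L^\infty(I)$ via the Fourier slice theorem or the classical Sobolev-scale mapping properties of $\Ro_i$; this can be made to work but is heavier machinery, and you leave the key estimate to a citation. Two corrections to your route: the embedding $H^{1/2}(I)\hookrightarrow L^\infty(I)$ you first invoke is false (critical exponent in one dimension), although your fallback $H^{1/2}\hookrightarrow L^4$ is true and suffices, and in fact a fixed-direction estimate gains half a derivative on top of $s=1$, landing in $H^{3/2}$; moreover, the elementary argument you abandoned actually goes through with no boundary difficulty, since for $\x\in C_0^\infty(D)$ one has $(\Ro_i\x)'=\Ro_i(\fs\sigma_i\cdot\nabla\x)$, so the chord Cauchy--Schwarz bound yields $\Ro_i:H_0^1(D)\to H^1(I)\hookrightarrow L^\infty(I)$ directly (the nonsmooth chord length only troubles functions that do not vanish near $\partial D$, and elements of $H_0^1(D)$ extend by zero), which would make your proof self-contained. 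In comparison, the paper's argument is shorter and entirely elementary, while yours isolates a reusable mapping property of $\Ro_i$ and gives the stronger byproduct that $\Ro_i(\x)$ is bounded on $I$.
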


\begin{proof}
Let $\x\in C_0^\infty (D)$.
By applying the Cauchy Schwarz inequality  two times,
\begin{equation}\label{estH}
\begin{aligned}
 \norm{\F_i( \x )}^2_{L^2}
  & =
  \int_{-1}^1
  \left[
  \left(
    \int_{\R}
    \x( s \fs \sigma_i + r \fs \sigma_i^\bot )
    \chi_D( s \fs \sigma_i + r \fs \sigma_i^\bot  )
    \, dr
  \right)^2
  \right]^2 ds
  \\
 & \leq \int_{-1}^1
  \left(
    2 \int_{\R}
    \x( s \fs \sigma_i + r \fs \sigma_i^\bot )^2
    \chi_D( s \fs \sigma_i + r \fs \sigma_i^\bot  )
    \, dr
  \right)^2 ds
 \\
  & \leq
  8\int_{-1}^1
  \left(
    \int_{\R}
    \x( s \fs \sigma_i + r \fs \sigma_i^\bot )^4
    \, dr
  \right) ds
  = 8 \norm{ \x }^4_{L^4} \,.
\end{aligned}
\end{equation}
Using  Sobolev's embedding theorem (see~\cite{Ada75}), it follows that
\begin{equation}\label{eq:L4H1}
    \norm{\F_i( \x )}^2_{L^2} \leq 8 \cdot \norm{\x}_{L^4}^4 \leq C \,\norm{\x}_{H^1}^4
\end{equation}
for some positive constant $C$. Therefore, $\F_i$ extends, by continuity,
to an operator $H_0^1(D) \to L^2(D)$.

Now let $\F_i'[\x]$ be defined by \req{sch-derivative}.
Obviously $\F_i'[\x]$ is a bounded linear operator. Moreover,
from the Cauchy Schwarz inequality it follows that
\begin{equation*}
\begin{aligned}
 \norm{ \F_i(\x + h) - \F_i(\x) - \F_i'[\x](h) } _{L^2}^2
 & =
  \norm{ \Ro_i^2(\x + h) - \Ro_i^2(\x) - 2 \Ro_i(\x) \Ro_i(h) } _{L^2}^2
  \\
  &
  =
  \norm{ R^2_i(h)}_{L^2}^2
  =
  \norm{ \F_i(h)}_{L^2}^2
  \leq  C \|h\|_{H^1}^4 \,.
\end{aligned}
\end{equation*}
Therefore, we have
\[
    \lim _{h\to 0} \norm{ \F_i(\x + h) - \F_i(\x) - \F_i'[\x](h) }_{L^2} / \norm{h}_{H^1} = 0 \,,
\]
which implies that  $\F_i'[\x]$ is the Fr\'echet
derivative of $\F_i$ at $\x$.
\end{proof}

We denote by $\Ro_i^\sharp$ the adjoint of $\Ro_i$, considered as an operator from $L^2( D )$ into
$L^2(I)$ \cite{Nat01}, which is given by
\begin{equation}\label{eq:bp}
    \Ro_i^\sharp: L^2(I) \to L^2(D) \quad
    \y \mapsto
    \bigg(
        \fs \xi \mapsto
        \Ro_i^\sharp(\y)(\fs \xi)
        :=
        \y(\langle \fs \xi, \fs \sigma_i \rangle)
    \bigg) \,.
\end{equation}
With this operator we can define the adjoint of $F_i'[x]$:

\begin{theorem}\label{th:adjoint}
The adjoint  $\F_i^\prime[ \x ]^\ast: L^2(I) \to H_0^1(D) $ of
$\F_i^\prime[\x]$ is given by $\y \mapsto \F_i^\prime[ \x ]^\ast(\y) =: q$, where $q$ is the
unique solution of
\begin{equation}\label{eq:adjoind}
   \left(  \mathrm{I} - \Delta \right) q  = 2 \Ro_i^\sharp ( \Ro_i(\x) \y )
\end{equation}
in $H_0^1(D)$. Here $\Delta$ denotes the \textit{Laplace operator}
on $H_0^1(D)$.
\end{theorem}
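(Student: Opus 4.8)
The plan is to identify the adjoint by directly manipulating the defining identity $\bracket{\F_i'[\x](h),\y}_{L^2(I)} = \bracket{h,\F_i'[\x]^\ast(\y)}_{H_0^1(D)}$, where the inner product on the left is the standard $L^2(I)$ product and the inner product on the right is the $H_0^1(D)$ product, which I will take to be $\bracket{f,g}_{H_0^1} = \int_D (fg + \nabla f\cdot\nabla g)\,d\fs\xi = \bracket{(\mathrm I-\Delta)f,g}_{L^2(D)}$ (interpreting $(\mathrm I-\Delta)f$ as an element of $H^{-1}$ when $f\in H_0^1$). First I would invoke the previous theorem to write $\F_i'[\x](h) = 2\Ro_i(\x)\Ro_i(h)$, so that the left-hand side becomes $\int_I 2\Ro_i(\x)(s)\,\Ro_i(h)(s)\,\y(s)\,ds = \bracket{\Ro_i(h), 2\Ro_i(\x)\y}_{L^2(I)}$. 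Then I would apply the adjoint relation for the Radon transform as an operator $L^2(D)\to L^2(I)$, namely $\bracket{\Ro_i(h), g}_{L^2(I)} = \bracket{h, \Ro_i^\sharp(g)}_{L^2(D)}$, to rewrite this as $\bracket{h, 2\Ro_i^\sharp(\Ro_i(\x)\y)}_{L^2(D)}$.

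At this point the right-hand side $\bracket{h, 2\Ro_i^\sharp(\Ro_i(\x)\y)}_{L^2(D)}$ is an $L^2(D)$ pairing, but we need an $H_0^1(D)$ pairing to read off the adjoint. So I define $q$ to be the unique solution in $H_0^1(D)$ of $(\mathrm I-\Delta)q = 2\Ro_i^\sharp(\Ro_i(\x)\y)$; existence and uniqueness follow from Lax–Milgram (or the Riesz representation theorem) since $(\mathrm I-\Delta): H_0^1(D)\to H^{-1}(D)$ is an isomorphism and the right-hand side lies in $L^2(D)\hookrightarrow H^{-1}(D)$. With this $q$, the weak formulation says precisely $\bracket{h, 2\Ro_i^\sharp(\Ro_i(\x)\y)}_{L^2(D)} = \bracket{h,q}_{H_0^1(D)}$ for all $h\in H_0^1(D)$. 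Chaining the equalities gives $\bracket{\F_i'[\x](h),\y}_{L^2(I)} = \bracket{h,q}_{H_0^1(D)}$ for all $h$, which by definition of the adjoint means $\F_i'[\x]^\ast(\y) = q$.

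A few technical points need care. One must check that $2\Ro_i^\sharp(\Ro_i(\x)\y)$ genuinely lies in $L^2(D)$ (hence in $H^{-1}$ so that the elliptic problem is solvable): since $\x\in H_0^1(D)\subset L^4(D)$ by Sobolev embedding and $\Ro_i$ maps into a space on which the relevant products make sense, $\Ro_i(\x)\y\in L^2(I)$, and the backprojection $\Ro_i^\sharp$ from \req{bp} maps $L^2(I)$ boundedly into $L^2(D)$ (it is a bounded operator, being the adjoint of the bounded operator $\Ro_i:L^2(D)\to L^2(I)$). One should also verify that the density argument is legitimate: the identity $\F_i'[\x](h)=2\Ro_i(\x)\Ro_i(h)$ and the adjoint relation for $\Ro_i$ were established on $C_0^\infty$ and extend by continuity, and all operators appearing are bounded with respect to the stated norms, so the final adjoint identity extends from $C_0^\infty(D)$ to all of $H_0^1(D)$.

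The main obstacle — and really the only nontrivial point — is the bookkeeping of which inner product lives on which space: the subtlety is that $\F_i'[\x]$ is regarded as an operator from $H_0^1(D)$ (not $L^2(D)$) into $L^2(I)$, so its adjoint must land in $H_0^1(D)$ equipped with its own inner product, and the elliptic "lifting" via $(\mathrm I-\Delta)^{-1}$ is exactly the device that converts the $L^2(D)$-pairing produced by the Radon backprojection into the required $H_0^1(D)$-pairing. Once this is recognized, the proof is a short chain of three identities plus one appeal to Lax–Milgram. I expect no computational difficulty beyond confirming the regularity claim $2\Ro_i^\sharp(\Ro_i(\x)\y)\in L^2(D)$.
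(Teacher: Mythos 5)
Your proposal is correct and follows essentially the same route as the paper: both rewrite the defining identity $\bracket{\F_i'[\x](h),\y}_{L^2}=\bracket{h,\F_i'[\x]^\ast(\y)}_{H^1}$, convert the left side via Fubini (equivalently the adjoint relation for $\Ro_i$ from \req{bp}) into $\bracket{h,2\Ro_i^\sharp(\Ro_i(\x)\y)}_{L^2}$, and interpret the $H_0^1$ inner product on the right through $(\mathrm{I}-\Delta)$ to identify $q$ as the solution of \req{adjoind}. Your only deviation is cosmetic: you define $q$ up front via Lax--Milgram and use the weak formulation, whereas the paper integrates by parts on $\F_i'[\x]^\ast(\y)$ directly, so your version is, if anything, slightly more careful about existence, uniqueness, and regularity of $q$.
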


\begin{proof}
Let $\x\in H^1_0(D)$. We remark that $\F_i^\prime[\x]^\ast$ is
defined by
\begin{equation}\label{eq:adjoint1}
  \langle \F_i'[\x](h), \y \rangle_{L^2}
  =
  \langle h, \F_i'[\x]^\ast(\y) \rangle_{H^1}
\end{equation}
for all $y\in L^2(D)$ and $\x\in H^1_0(D)$.

\begin{enumerate}
\item
Using Fubini's theorem and \req{bp}, the left hand side of \req{adjoint1} can be
rewritten as follows
\begin{equation}\label{adjoint2}
\begin{aligned}
  \langle \F_i'[\x](h), \y \rangle_{L^2}
   & = 2 \int_{-1}^{1} \left( \y(s) \Ro_i(\x)(s) \int_{\R}
                    h( s \fs \sigma_i + r \fs \sigma_i^\bot )
                \, dr \right) ds
   \\
   & = 2 \int_B
    h(\fs \xi) \y(\bracket{\fs \xi, \fs \sigma} ) \Ro_i(\x)(\bracket{\fs \xi, \fs \sigma})
   \dd \fs \xi \\
   & =
   \bracket{ h, 2 \Ro_i^\sharp( \Ro_i(\x) \y)} _{L^2}\,.
\end{aligned}
\end{equation}

\item
The left hand side of \req{adjoint1} reads as follows
\begin{equation}\label{adjoint3}
\begin{aligned}
  \bracket{ h, \F_i'[\x]^\ast(\y) }_{H^1}
    & =
   \bracket{ h, \F_i'[\x]^\ast(\y)}_{L^2}
   +
   \bracket{ \nabla h, \nabla \F_i'[\x]^\ast(\y) }_{L^2}
   \\
  &=
  \bracket{ h, \F_i'[\x]^\ast(\y) - \Delta \F_i'[\x]^\ast(\y) }_{L^2}
  \\
  &=
  \bracket{ h, (1 - \Delta ) \F_i'[\x]^\ast(\y) }_{L^2}
\end{aligned}
\end{equation}
since $h$ vanishes at the boundary of $D$.
\end{enumerate}
Inserting   (\ref{adjoint2}) and (\ref{adjoint3}) in \req{adjoint1}
concludes the proof.
\end{proof}

\subsection{Numerical reconstruction}

According to \req{adjoind} the \textsc{lLK} method for
Schlieren tomography reads as follows
\begin{equation*}
  \x_{n+1} =
    \x_{n} -
    2 \mu^2 \omega_n
    (\mathrm{I}-\Delta)^{-1} R^\sharp_{[n]}
    \bigg(
        \Ro_{[n]}(\x_{n}) ( \F_{[n]}(\x_{n}) - \y^{\delta,[n]})
    \bigg)\,,
\end{equation*}
and
\begin{equation} \label{eq:skipR}
\om_n := \om_n( \delta^{[n]}, \y^\delta)   =
\begin{cases}
      1  & \norm{ \F_{[n]}(\x)  - \y^{\delta, [n]}}
      > \tau \delta^{[n]}, \\
      0  & \mbox{ otherwise\,. }
\end{cases}\,.
\end{equation}
Here $2 (\mathrm{I}-\Delta)^{-1}  \Ro_{[n]}^\sharp ( \Ro_{[n]}(\x) \y )$ denotes the
unique solution of \req{adjoind} with $\y = ( \F_{[n]}(\x_{n}) - \y^{\delta,[n]})$ and $\mu$
is a scaling parameter that ensures $\norm{\mu \F_i'[\x]} \leq 1$ in a closed ball
around the starting value $\x_0$.

%
\begin{figure}
\begin{center}
\includegraphics[height=4.9cm,angle=0]{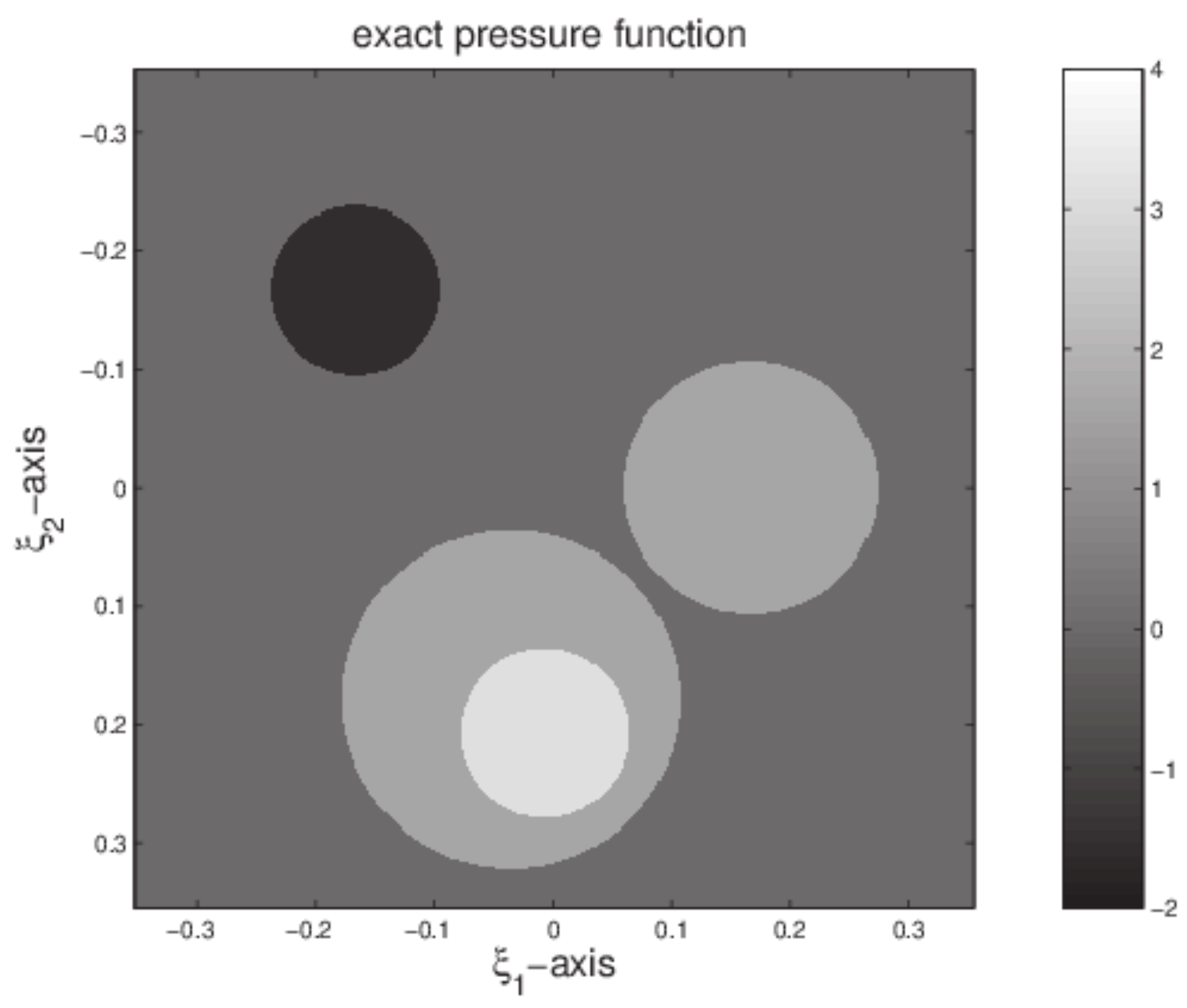}
\includegraphics[height=4.9cm,angle=0]{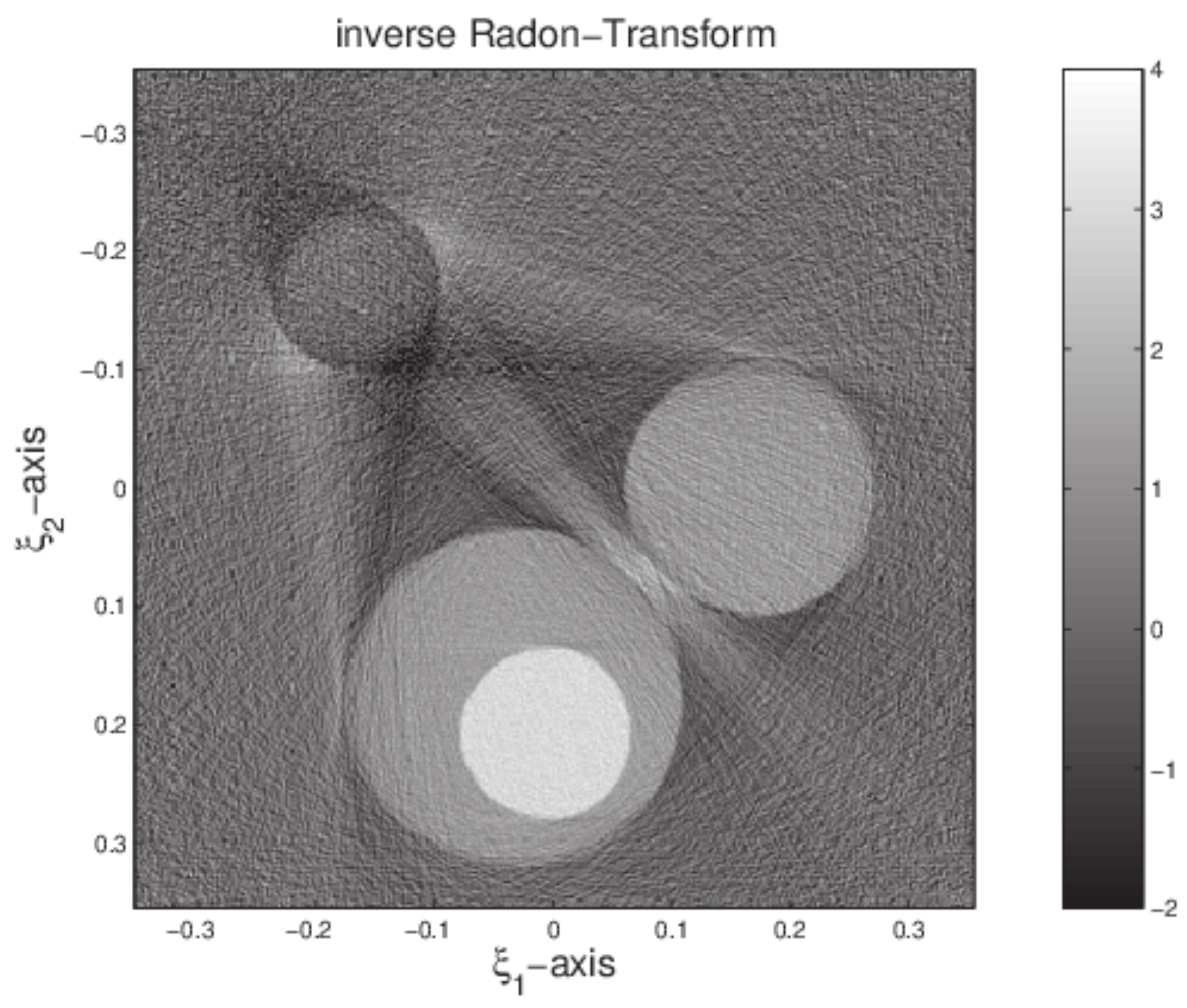}
\includegraphics[height=4.9cm,angle=0]{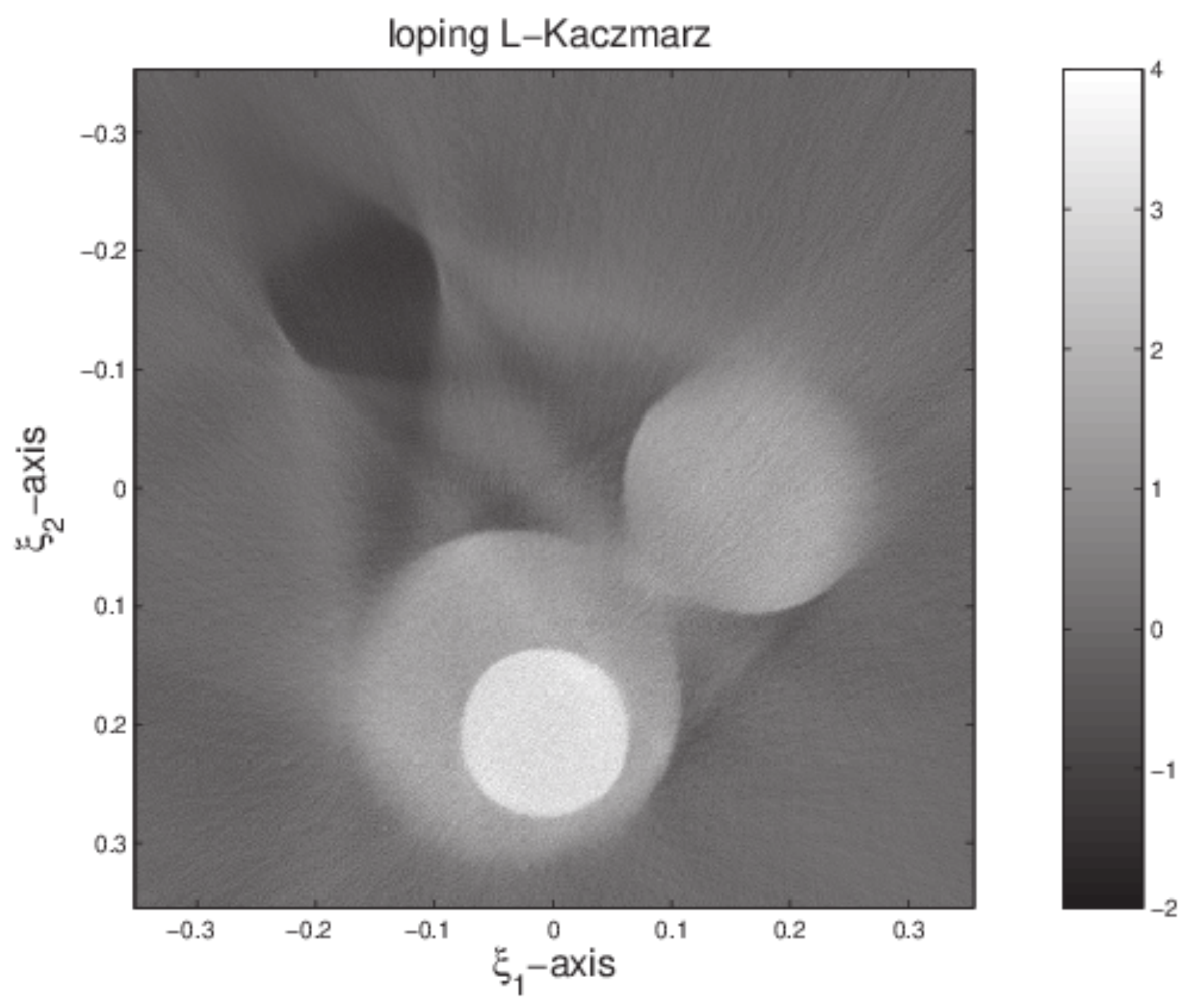}
\includegraphics[height=4.9cm,angle=0]{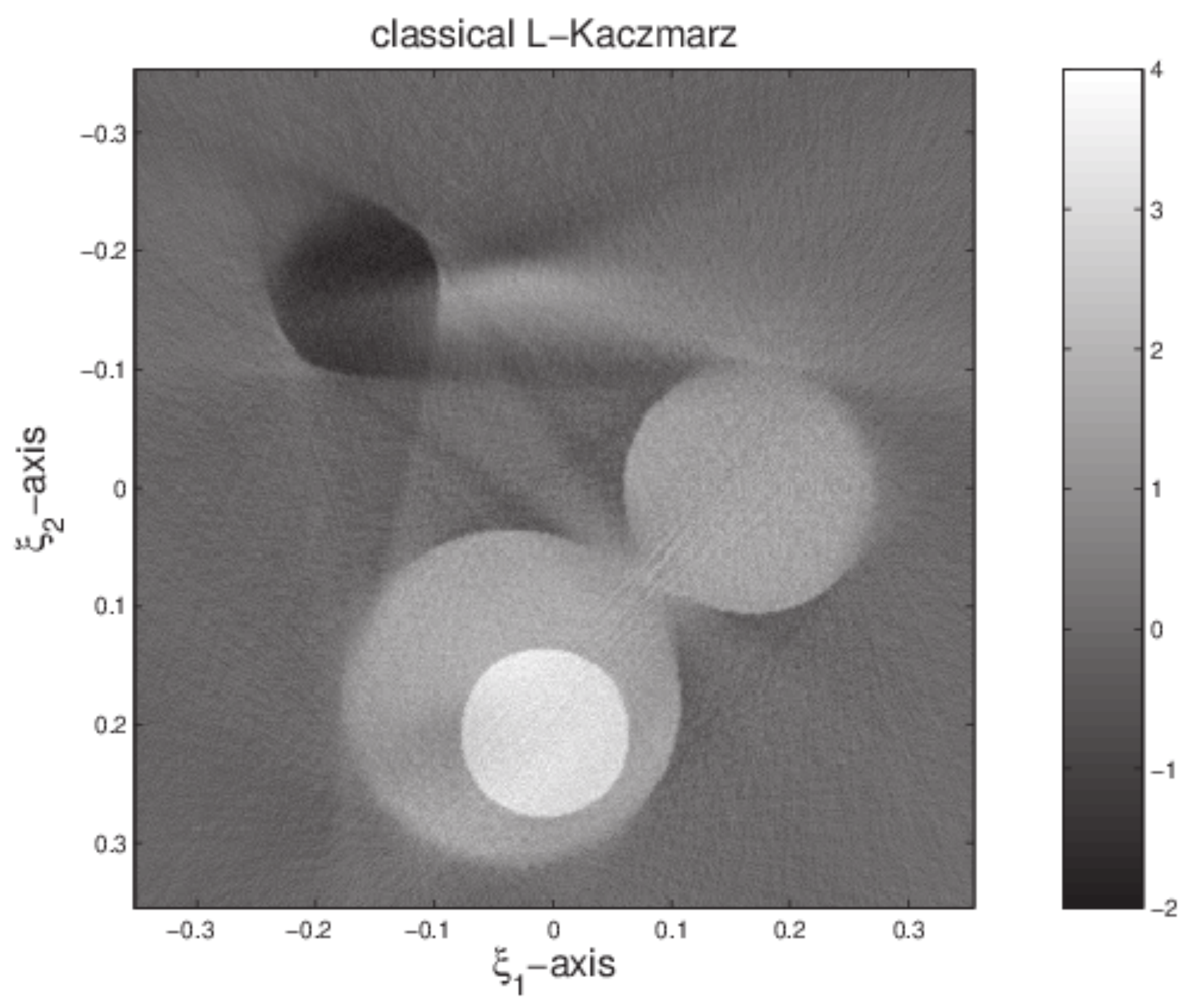}
\end{center}
\caption{The pictures in the first line show the exact pressure function $\x$
and the estimation with the Filter Back Projection (FBP) Algorithm from noisy 
data.
The pictures in the second line show the results obtained with the
\textsc{lLK} method and the \textsc{LK} method with $\tau=2.2$.
All three simulations were performed with data with was perturbed by $0.01$ 
percent noise.
Note that in our reconstruction negative values in $\x$ can be detected, which cannot be 
detected by applying a FBP algorithm.}
\label{sch-rekon}
\end{figure}

In the numerical implementation we approximated functions defined on
$D$  and $I$ by piecewise linear splines. In the numerical experiment we used
$\tau = 2.2$ and $N = 250$.
The synthetic data set $\y^{\delta, i}$, $i \in 0, \dots, N-1$,
were generated by adding normal distributed random noise with $0.01 \%$ noise level
to the exact data.
The phantom to be reconstructed is shown in Figure \ref{sch-rekon}. It is a
superposition of several  characteristic functions. The reconstructions with the
\textsc{lLK} method and the \textsc{LK} method were performed using the constant function
$\x_0(\fs \xi) = 0.01$ as initial guess.

\begin{figure}
\begin{center}
\includegraphics[height=5.0cm,angle=0]{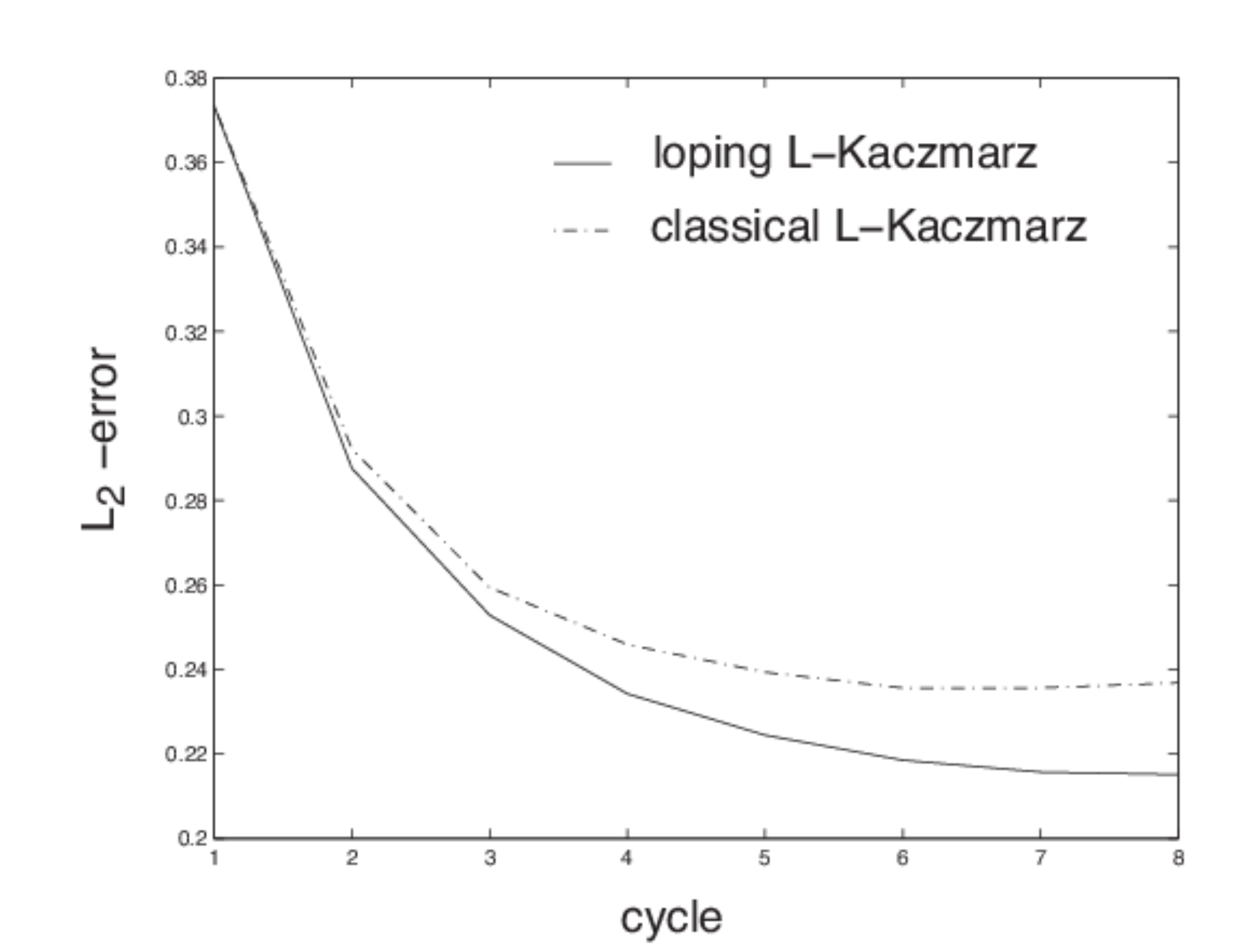}
\includegraphics[height=5.0cm,angle=0]{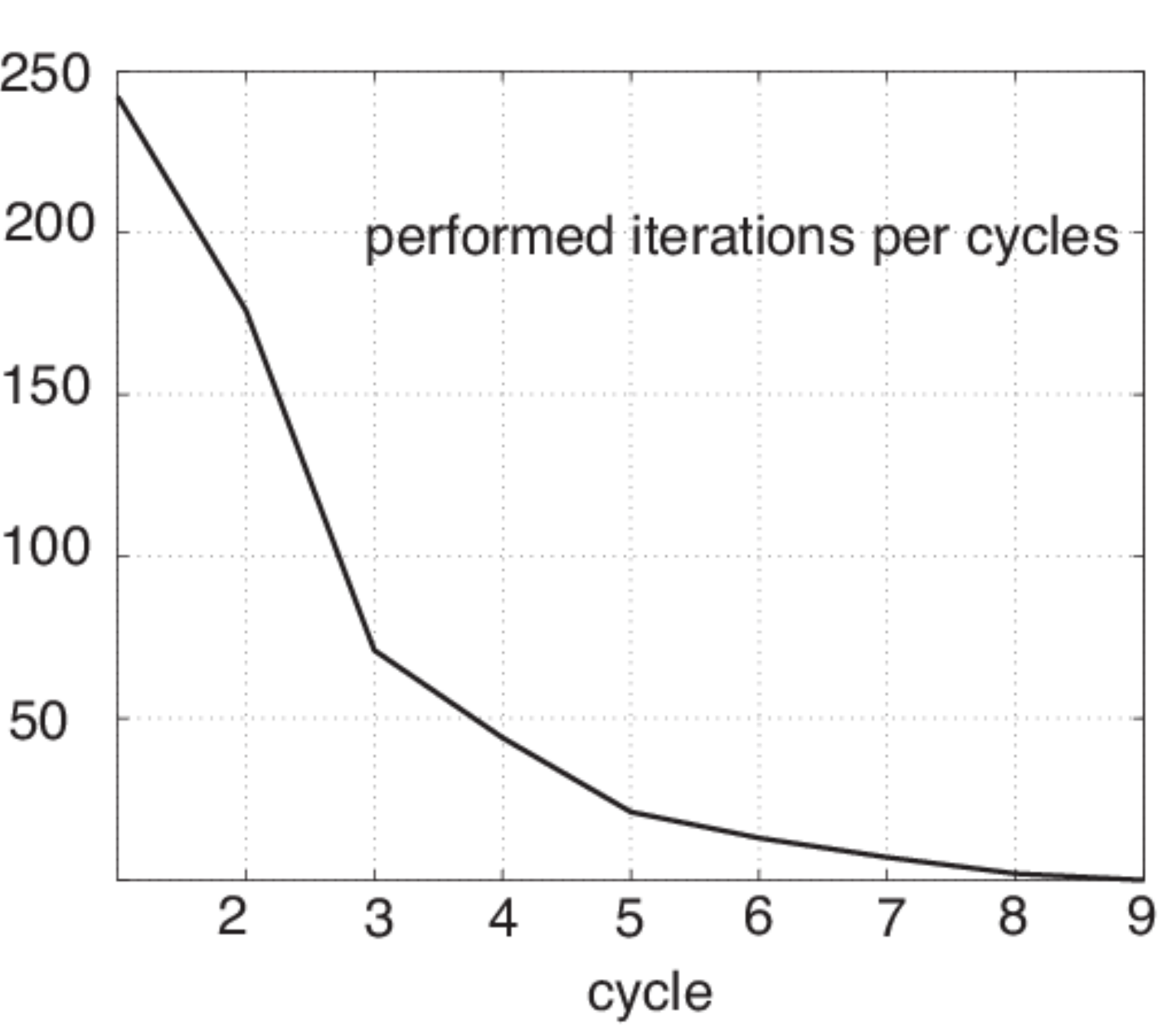}
\end{center}
\caption{
The left image shows the decrease of the error using the \textsc{lLK} method and the
\textsc{LK} method. The right picture shows the actually performed number of iterations
within a cycle of the \textsc{lLK} method.
Within the $9$--th cycle of the \textsc{lLK} method all $\omega_{n} = 0$ and the iteration
terminates.
}\label{sch-steps}
\end{figure}

We note that in all numerical experiments the \textsc{lLK} method applied to Schlieren
tomography converged, even if the tangential cone condition was not satisfied.

\subsection{Discussion}

In contrast to the FPB algorithm the \textsc{lLK} method and the \textsc{LK} method are able to
reconstruct both the positive and negative part of $u$. The \textsc{lLK} method reduces
the artifact which is present in the reconstruction via the \textsc{LK} method.

The left image in Figure \ref{sch-steps} shows that the number of actually computed Landweber
steps per cycle in the \textsc{lLK} method is rapidly decreasing. Moreover, the norm of the error
in the reconstruction with the \textsc{lLK} method is below the error of the \textsc{LK} method.
Actually, the regularized solution  $\x_{n_\ast^\delta}$ of the \textsc{lLK} method is a better
approximation of the true solution than all iterates of the \textsc{LK} method.

Since the used measuring data are squared numbers, there are two solutions, one with
a positive sign and one with a negative sign.
Our numerical simulations showed that a strictly positive initial guess $\x_0$ leads
to a numerical reconstruction $\x_{n_\ast^\delta}$ with positive mean value.

\section{Conclusion}

We applied the abstract theory of the first part of this article
to thermoacoustic computed tomography, to an inverse problem for semiconductors
and to Schlieren tomography. In all applications the \textsc{lLK} method turned 
out to be an efficient iterative regularization method.

\section*{Acknowledgements}

The work of M.H. and O.S. is supported by FWF (Austrian Fonds zur
F\"orderung der wissenschaftlichen Forschung) grants  Y--123INF and P18172.
Moreover, O.S. is supported by FWF projects FSP S9203 and S9207.
The work of A.L. is supported by the Brasilian National Research Council CNPq,
grants 305823/2003--5 and 478099/2004--5.

\bibliographystyle{amsplain}
\bibliography{lit-embedding}

\medskip
        {\it E-mail address: }markus.haltmeier@uibk.ac.at\\
 \indent{\it E-mail address: }richard.kowar@uibk.ac.at\\
 \indent{\it E-mail address: }a.leitao@ufsc.br\\
 \indent{\it E-mail address: }otmar.scherzer@uibk.ac.at\\

\end{document}